\documentclass[11pt]{article}
\usepackage{amsmath}
\usepackage{amssymb,amsbsy}
\usepackage[dvips]{graphicx}
\usepackage{caption}
\usepackage{subcaption}
\usepackage{pstricks}
\usepackage{pst-eps}
\usepackage{pst-node}
\usepackage{enumerate}
\usepackage{dsfont}
\usepackage{natbib}
\usepackage[margin=1in]{geometry}
\usepackage{tikz} \usetikzlibrary{arrows,decorations.pathmorphing,backgrounds,positioning,fit,petri,mindmap,shapes.geometric,decorations.pathreplacing}

\usepackage{pdfsync}
\usepackage{hyperref}

\allowdisplaybreaks[1]

\newtheorem{theorem}{Theorem}
\newtheorem{proposition}{Proposition}
\newtheorem{lemma}{Lemma}
\newtheorem{corollary}{Corollary}

\renewcommand{\phi}{\varphi}

\renewcommand{\P}{\mathbb{P}}
\newcommand{\E}{\mathbb{E}}

\newcommand{\R}{\mathbb{R}}

\newcommand{\cD}{\mathcal{D}}

\newcommand{\cF}{\mathcal{F}}

\newcommand{\cE}{\mathcal{E}}
\newcommand{\cW}{\mathcal{W}}
\newcommand{\cU}{\mathcal{U}}

\def\ds1{\mathds{1}}
\renewcommand{\epsilon}{\varepsilon}

\newcommand\Var{{\dsV\text{ar}}\,}
\newcommand\dsV{\mathbb{V}}

\newcommand{\wh}{\widehat}

\newcommand{\argmin}{\mathop{\mathrm{argmin}}}

\renewcommand{\tilde}{\widetilde}

\newlength{\minipagewidth}
\setlength{\minipagewidth}{\textwidth}
\setlength{\fboxsep}{3mm}
\addtolength{\minipagewidth}{-\fboxrule}
\addtolength{\minipagewidth}{-\fboxrule}
\addtolength{\minipagewidth}{-\fboxsep}
\addtolength{\minipagewidth}{-\fboxsep}

\newcommand{\beq}{\begin{equation}}
\newcommand{\eeq}{\end{equation}}

\newcommand{\beqa}{\begin{eqnarray}}
\newcommand{\eeqa}{\end{eqnarray}}

\newcommand{\beqan}{\begin{eqnarray*}}
\newcommand{\eeqan}{\end{eqnarray*}}

\def\ba#1\ea{\begin{align*}#1\end{align*}} 
\def\banum#1\eanum{\begin{align}#1\end{align}} 

\newcommand{\UA}{\mathrm{UA}}
\newcommand{\TV}{\mathrm{TV}}

\newcommand{\diam}{\mathrm{diam}}
\newcommand{\dist}{\mathrm{dist}}

\newcommand{\utau}{\underline{\tau}}
\newcommand{\EE}{\mathbb{E}}
\newcommand{\uphi}{\underline{\phi}}
\newcommand{\upsi}{\underline{\psi}}

\newcommand{\ST}{G}
\newcommand{\usigma}{\underline{\sigma}}
\newcommand{\ua}{\underline{a}}

\newcommand{\BlackBox}{\rule{1.5ex}{1.5ex}}  
\newenvironment{proof}{\par\noindent{\bf Proof\ }}{\hfill\BlackBox\\[2mm]}
\newtheorem{fact}{Fact}

\def \simless {\mathbin{\lower 0.1pt\hbox{$\char'074 {\hspace{-0.41cm}\raise 5.5pt \hbox{$\mathchar"7218$}\hspace{0.5pt}}$}}}

\begin{document}
\title{From trees to seeds: on the inference of the seed from large trees \\ in the uniform attachment model}
\author{
	S{\'e}bastien Bubeck
	\thanks{Microsoft Research and Princeton University; \texttt{sebubeck@microsoft.com}.}
	\and
	Ronen Eldan
	\thanks{Microsoft Research; \texttt{roneneldan@gmail.com}.}
	\and
	Elchanan Mossel
	\thanks{University of Pennsylvania and University of California, Berkeley; \texttt{mossel@wharton.upenn.edu}.}
	\and
	Mikl\'os Z.\ R\'acz
	\thanks{University of California, Berkeley; \texttt{racz@stat.berkeley.edu}.}
}
\date{\today}

\maketitle


\begin{abstract}
 We study the influence of the seed in random trees grown according to the uniform attachment model, also known as uniform random recursive trees. 
 We show that different seeds lead to different distributions of limiting trees from a total variation point of view. 
 To do this, we construct statistics that measure, in a certain well-defined sense, global ``balancedness'' properties of such trees. 
 Our paper follows recent results on the same question for the preferential attachment model.
\end{abstract}


\section{Introduction} \label{sec:intro} 
We consider one of the simplest models of a randomly growing graph: starting from a single node, each arriving new node connects uniformly at random to one of the existing nodes. We denote by $\UA(n)$ the corresponding tree on $n$ vertices, also known as the uniform random recursive tree. We investigate whether a snapshot of the tree at a finite time can give information about the asymptotic future evolution of the tree. More precisely, we want to know if conditioning on the value of $\UA(k)$ for some finite $k$ affects the limiting distribution of $\UA(n)$ as $n$ goes to infinity. Equivalently, we study the uniform attachment model starting from an initial seed, and we want to know if the limiting distribution depends on the seed.

\medskip

For $n \geq k \geq 2$ and a tree $S$ on $k$ vertices, we define the random tree $\UA \left( n, S \right)$ by induction. 
First, $\UA \left( k, S \right) = S$. 
Then, given $\UA \left( n, S \right)$,  $\UA \left( n+1, S \right)$ is formed from $\UA \left( n, S \right)$ by adding a new vertex $u$ and adding a new edge $uv$ where the vertex $v$ is chosen uniformly at random among vertices of $\UA \left( n, S \right)$, independently of all past choices.  For two seed trees $S$ and $T$, we are interested in studying the quantity
\[
\delta \left( S, T \right) = \lim_{n \to \infty} \mathrm{TV} \left( \UA\left( n, S \right), \UA\left( n, T \right) \right), 
\]
a limit which is well-defined\footnote{This is because $\mathrm{TV}(\UA(n, S), \UA(n, T))$ is non-increasing in $n$ (since one can simulate the future evolution of the process) and always nonnegative.}, 
where we recall that the total variation distance between two random variables $X$ and $Y$ taking values in a finite space $\mathcal{X}$ with laws $\mu$ and $\nu$ is defined as $\TV \left(X,Y \right) = \frac{1}{2} \sum_{x\in \mathcal{X}} \left| \mu \left( x \right) - \nu \left( x \right) \right|$. 
Our main result shows that each seed leads to a unique limiting distribution of the uniform attachment tree. 
\begin{theorem}\label{thm:main}
For any $S$ and $T$ non-isomorphic with at least $3$ vertices, we have that $\delta(S,T) > 0$.
\end{theorem}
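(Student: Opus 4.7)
My plan is to construct a real-valued statistic $\phi$ on finite trees such that the distributions of $\phi(\UA(n, S))$ and $\phi(\UA(n, T))$ remain separated in total variation as $n \to \infty$; since $\TV(\UA(n, S), \UA(n, T)) \geq \TV(\phi(\UA(n, S)), \phi(\UA(n, T)))$ for any measurable $\phi$, this will yield $\delta(S, T) > 0$. The guiding structural fact is the P\'olya-urn description of $\UA$ started from a seed: attributing each vertex added after time $k = |S|$ to the unique seed vertex at the base of its ancestral path defines $k$ ``color classes'' whose sizes evolve as a balanced P\'olya urn with initial configuration $(1, \ldots, 1)$. Consequently, the normalized color-class sizes converge almost surely to a $\Dir(1, \ldots, 1)$ vector on the $k$-simplex, and for every seed edge $e$ that separates $S$ into parts of sizes $a$ and $b = k - a$, the fraction of $\UA(n, S)$ on one side of $e$ converges a.s.\ to a $\Beta(a, b)$ random variable. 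Thus the seed leaves a persistent ``balance fingerprint'' in the macroscopic geometry of the tree, and the plan is to design statistics that read it off.

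A natural concrete choice is to count macroscopically balanced edges: for $\alpha \in (0, 1/2)$ set
\[
B_\alpha(T) = \#\{ e \in E(T) : \min(|C_1(e)|, |C_2(e)|) \geq \alpha\, |T| \},
\]
where $C_1(e), C_2(e)$ are the two components of $T \setminus e$. A non-seed edge born at some time $t > k$ has subtree fraction converging a.s.\ to a $\Beta(1, t-1)$ variable, so its balance indicator has limit probability $(1-\alpha)^{t-1} - \alpha^{t-1}$; since this is summable in $t$, the non-seed edges together contribute a random variable whose law depends only on $k$ and $\alpha$. Seed edges contribute indicators of events $\{\Beta(a_e, b_e) \in [\alpha, 1-\alpha]\}$, and the expectations $I_{1-\alpha}(a_e, b_e) - I_\alpha(a_e, b_e)$ depend on the multiset $\{(a_e, b_e) : e \in S\}$ of cut sizes. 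The technical steps would then be: (i) show that $B_\alpha(\UA(n, S))$ converges in distribution to some $B_\alpha^S$, by verifying a.s.\ convergence of each edge's balance indicator and controlling the tail contribution of late-arriving edges via dominated convergence; (ii) compute $\E B_\alpha^S = c(\alpha, k) + \sum_{e \in S}[I_{1-\alpha}(a_e, b_e) - I_\alpha(a_e, b_e)]$, and note that varying $\alpha$ recovers the full cut multiset; (iii) derive a second-moment bound, leveraging near-independence of distant edges' balances, to obtain concentration strong enough that any gap between $\E B_\alpha^S$ and $\E B_\alpha^T$ translates via Chebyshev into a quantitative TV separation.

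The main obstacle is the case in which the edge-cut multisets of $S$ and $T$ coincide, which can happen for non-isomorphic seeds: the scalar count $B_\alpha$ then gives the same limiting mean on both sides, so one must extract strictly finer information from the same fingerprint. My approach is to pass to joint statistics, such as the empirical measure on $[0,1]^2$ of pairs $(|C_1(e)|/n, |C_1(e')|/n)$ indexed by balanced edges $e, e'$ sharing a common endpoint (or more generally occurring in a prescribed combinatorial relation). These statistics are governed by the joint law of $(W_e)_{e \in S}$ with $W_e = \sum_{v \in A(e)} Y_v$, $Y \sim \Dir(1, \ldots, 1)$, which reflects the full tree structure of $S$ rather than just its cut multiset. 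Reducing the theorem to a faithfulness statement---that the joint law of subtree-size fractions around the seed determines the seed up to isomorphism---is what I expect to be the hardest ingredient; it is essentially a combinatorial/probabilistic uniqueness claim about sums of Dirichlet coordinates indexed by a tree, which I would attack by induction on the seed size together with conditioning on the marginal splits of a carefully chosen seed edge.
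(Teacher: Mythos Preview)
Your proposal takes a genuinely different route from the paper's proof. The paper does not use the P\'olya-urn/Dirichlet limit of the subtree proportions at all; instead it introduces a family of polynomial statistics $F_{\utau}(T)$ indexed by \emph{decorated trees} $\utau$ (a tree $\tau$ with a nonnegative integer label at each vertex), where $F_{\utau}(T)$ sums over all embeddings $\phi:\tau\to T$ the product of falling factorials of the component sizes of $T\setminus\phi(\tau)$. A one-step recurrence for $\E[F_{\utau}(\UA(n+1,S))\mid\cF_n]$ in terms of $F_{\utau'}(\UA(n,S))$ for $\utau'\preccurlyeq\utau$ yields, after a triangular inversion, an $L^2$-bounded martingale $M_{\utau}^{(S)}(n)$ for each $\utau$; the second-moment bound is obtained by interpreting $F_{\utau}(T)^2$ combinatorially as a count of pairs of ``decorated embeddings'' and dominating each pair by a single decorated embedding of a larger tree (with the logarithmic diameter of $\UA(n,S)$ controlling the size of that tree). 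The separation of non-isomorphic seeds is then a one-line observation: taking $\utau=T$ with all labels equal to $1$ gives $F_{\utau}(T')=\#\mathrm{Aut}(T)\cdot\mathbf{1}\{T'\cong T\}$ for any $T'$ with $|T'|=|T|$, so at time $n_0=\max(|S|,|T|)$ the expectations of $F_{\utau}$ already differ, and Paley--Zygmund converts this into a TV gap. Your approach trades this algebraic family of statistics for a balance-count family $B_\alpha$ together with a structural ``faithfulness'' claim about Dirichlet sums.

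The gap in your plan is precisely the part you flag as hardest, and it is more serious than you indicate. For seeds with the same edge-cut multiset you reduce to the claim that the joint law of $(W_e)_{e\in E(S)}$, $W_e=\sum_{v\in A(e)}Y_v$ with $Y\sim\Dir(1,\dots,1)$, determines $S$ up to isomorphism. Two issues remain. First, the faithfulness claim itself is only a heuristic: the inductive strategy you sketch (condition on one $W_e$ and recurse) does not cleanly reduce to smaller instances, since the two sides of the cut are not independent Dirichlets once you condition on $W_e$, and you give no argument that the family of joint moments separates trees. Second, and more fundamentally, even granting faithfulness you have not explained how to read the joint law of the seed-edge fractions off the \emph{unlabeled} tree. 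For the scalar count $B_\alpha$ you correctly argue that the non-seed contribution has a law depending only on $k=|S|$, so it cancels in expectation; but for your proposed joint statistics involving adjacency (``balanced edges sharing a common endpoint'') this decoupling fails. A late-born balanced edge is adjacent to whichever seed edges touch its parent, and which seed vertex that is depends on the very Dirichlet vector you are trying to recover; thus the non-seed contribution to your joint statistic is entangled with the seed structure in a seed-dependent way, and you have not shown that the resulting observable actually isolates the seed fingerprint. The paper's decorated-embedding machinery sidesteps both problems: because the family $\{F_{\utau}\}$ is rich enough to contain, for each tree $T$, a statistic that vanishes on every other tree of the same size, one never needs to ``locate'' the seed inside the grown tree, and no uniqueness theorem about Dirichlet sums is required.
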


In some cases our method can say even more. 
As a proof of concept, we prove the following result, which states that the distance between a fixed tree and a star can be arbitrarily close to $1$ if the star is large enough.

\begin{theorem}\label{th:arbstars}
Let $S_k$ denote the $k$-vertex star. 
For any fixed tree $T$ one has
\[
\lim_{k \to \infty} \delta \left( S_{k}, T \right) = 1.
\]
\end{theorem}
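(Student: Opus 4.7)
The plan is to construct events $A_k$ capturing a ``balanced hub'' structure that is typical for $\UA(n, S_k)$ but atypical for $\UA(n, T)$. For a finite tree $F$ and $\alpha \in (0, 1)$, define the balancedness statistic
$$\phi_\alpha(F) := \max_{v \in V(F)} \#\bigl\{u \in N(v) : |F^v_u| \ge \alpha |F|\bigr\},$$
where $F^v_u$ denotes the connected component of $F \setminus \{v\}$ containing $u$. Set $\alpha_k := 1/k^3$ and $A_k := \{\phi_{\alpha_k} \ge k - 1\}$. The goal is to show $\lim_{n\to\infty} \P(\UA(n, S_k) \in A_k) \to 1$ and $\lim_{n\to\infty}\P(\UA(n, T) \in A_k) \to 0$ as $k \to \infty$; the standard total variation bound $\TV(X, Y) \ge |\P(X \in A) - \P(Y \in A)|$ then yields the theorem.

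For the lower bound on $\UA(n, S_k)$, I would analyze the subtrees hanging off the center $c$ of the seed via a Polya urn. Because uniform attachment picks the parent of each new leaf uniformly at random, the $k-1$ subtrees rooted at the original star leaves (each of size $1$ at time $k$), together with a ``rest'' block consisting of $c$ itself and every subtree born at $c$ after time $k$, evolve jointly as a Polya urn with initial weights $(1, \dots, 1)$ in $k$ coordinates. Their limiting fractional sizes therefore converge to $\Dir(1, \dots, 1)$, and a direct simplex-volume computation yields
$$\P\bigl(\min_{1 \le i \le k-1} \pi_i \ge \alpha_k\bigr) = (1-(k-1)\alpha_k)^{k-1},$$
which tends to $1$ as $k \to \infty$ since $(k-1)\alpha_k = (k-1)/k^3 = o(1/k)$.

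For the upper bound on $\UA(n, T)$, which is the main obstacle, I would apply $\P(\UA(n,T) \in A_k) \le \sum_v \P(v \text{ has at least } k-1 \text{ big subtrees})$ and split by birth time $t_v$. For \emph{early} vertices ($t_v \le k^3$, of which there are at most $k^3$), I would split children of $v$ further into those born before or after time $k^3$: the early-child count is a sum of independent Bernoullis of total mean $O(\log k)$, so Chernoff gives a tail of order $k^{-\Omega(k)}$ for exceeding $(k-2)/2$; the expected number of late children whose subtrees coincidentally become big is bounded by a finite constant (via $\sum_{t' > k^3}(1/t')(1-\alpha_k)^{t'-1} = O(1)$), and negative association of sibling subtree sizes under the Polya urn yields a Chernoff-type tail of order $(C/k)^{k/2}$. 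For \emph{late} vertices ($t_v > k^3$), since $v$'s non-parent subtrees all fit inside its descendant subtree $T_v$, one has $\#\{\text{big subtrees at } v\} \le 1 + |T_v|/(\alpha_k n)$; using the Polya urn limit $|T_v|/n \to \Beta(1, t_v - 1)$ gives $\P(v \text{ problematic}) \le (1-(k-2)\alpha_k)^{t_v - 1} \le e^{-(k-2)(t_v-1)/k^3}$, and summing the resulting geometric series over $t_v > k^3$ produces a bound of order $k^2 e^{-k}$.

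Combining the two directions, $\delta(S_k, T) \ge \lim_{n\to\infty}[\P(\UA(n, S_k) \in A_k) - \P(\UA(n, T) \in A_k)] \to 1 - 0 = 1$ as $k \to \infty$, which proves the theorem. The main technical difficulty is the union bound over late vertices, whose count grows linearly in $n$ as $n \to \infty$; the Polya urn concentration for $|T_v|$ provides the exponential decay in $t_v$ that converts the $\Theta(n)$ linear count into a convergent geometric series, and is the crucial ingredient.
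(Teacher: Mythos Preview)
Your approach is genuinely different from the paper's, and considerably more intricate. The paper does not track a structural event at all; it uses the single real-valued statistic
\[
G(F)=\sum_{e\in E(F)}\frac{|F_1(e)|^2|F_2(e)|^2}{|F|^4},
\]
shows via a one-edge P\'olya-urn anti-concentration bound that $\P(G(\UA(n,T))<z)\le 4C|T|\sqrt z$ for every fixed $T$, and shows via a direct first-moment calculation that $\E[G(\UA(n,S_k))]\le 3C'/k$. Markov at threshold $z=3C'/\sqrt k$ then separates the two models with error $O(k^{-1/4})$. No union bound over vertices, no early/late case split, and no negative-association argument are needed.

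Your proposal, by contrast, has a real gap in the early-vertex case. You assert that the indicators ``late child of $v$ has a big subtree'' satisfy a Chernoff-type tail via ``negative association of sibling subtree sizes under the P\'olya urn,'' but this is not justified. Conditional on which vertices are children of $v$, the sibling subtree sizes are indeed negatively associated (Dirichlet--multinomial); however, the set of children of $v$ is itself random and correlated with earlier subtree growth, so the unconditional NA of the compound indicators $\mathbf 1\{w_{t'}\text{ is a child of }v\}\cdot\mathbf 1\{|T_{w_{t'}}|\ge \alpha_k n\}$ does not follow. Without this, Markov alone gives only $\P(\#\text{big subtrees at }v\ge k-1)=O(\log k/k)$, and the union bound over $k^3$ early vertices blows up. The gap is repairable---for instance, push the early/late boundary out to $M=Ck^3\log k$, observe that $\sum_{t'>M}\P(|T_{w_{t'}}|\ge\alpha_k n)\to 0$, so that with high probability \emph{no} vertex born after $M$ has a big descendant subtree, and then bound the number of big subtrees at each early $v$ purely by its degree at time $M$, which is a genuine sum of independent Bernoullis---but as written the argument is incomplete.

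There is also a smaller issue in the late-vertex bound: you invoke $\P(|T_v|/n\ge y)\le(1-y)^{t_v-1}$, which is the $n\to\infty$ limit, inside a sum whose number of terms grows with $n$. You need a bound uniform in $n$ (e.g.\ split on the underlying Beta at $y/2$ and control the binomial fluctuation by Chernoff, absorbing an extra $n e^{-c(y)n}\to 0$ term). This is routine but should be said.
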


\subsection{Our approach and comparison with the preferential attachment model} \label{sec:related} 

The theoretical study of the influence of the seed graph in growing random graph models was initiated, to the best of our knowledge, by~\cite{bubeck2014influencePAseed} for the particular case of preferential attachment trees. 
They showed that seeds with different degree profiles lead to different distributions of limiting trees from a total variation point of view. 
\cite{curien2014scaling}, using a different but related approach, then showed that this also holds for any two non-isomorphic seeds, i.e., that the analogue of Theorem~\ref{thm:main} above holds for the preferential attachment model. 

\medskip

The main idea in both papers, motivated by the rich-get-richer property of the preferential attachment model, is to consider various statistics based on large degree nodes, and show that the initial seed influences the distribution of these statistics. 
Consider, for instance, the problem of differentiating between the two seed trees in Figure~\ref{fig:ex1}. 
On the one hand, in $S$ the degree of $v_\ell$ is greater than that of $v_r$, and this unbalancedness in the degrees likely remains as the tree grows according to preferential attachment. 
On the other hand, in $T$ the degrees of $v_{\ell}$ and $v_r$ are the same, so they will have the same distribution at larger times as well. 
This difference in the balancedness vs.\ unbalancedness of the degrees of $v_{\ell}$ and $v_r$ is at the root of why  the seed trees $S$ and $T$ are distinguishable in the preferential attachment model. 
A precise understanding of the balancedness properties of the degrees relies on the classical theory of P\'olya urns. 

\begin{figure}[ht!]
 \centering
  \begin{subfigure}[h]{0.95\textwidth}
   \centering
   \includegraphics[width=0.7\textwidth]{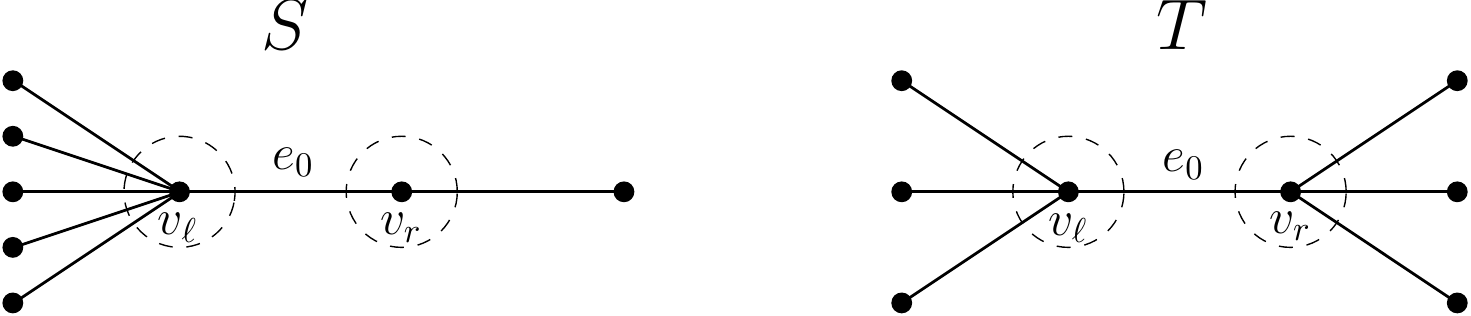}\\[0.25cm]
   \caption{\textbf{Preferential attachment.} The degrees of $v_{\ell}$ and $v_r$ are unbalanced in $S$ but balanced in $T$, 
and this likely remains the case as the trees grow according to preferential attachment. 
 This is at the root of why $S$ and $T$ are distinguishable as seed trees in the preferential attachment model.}
   \label{fig:ex1_pref}
  \end{subfigure}
  \\[0.25cm]
  \begin{subfigure}[h]{0.95\textwidth}
   \centering
   \includegraphics[width=0.75\textwidth]{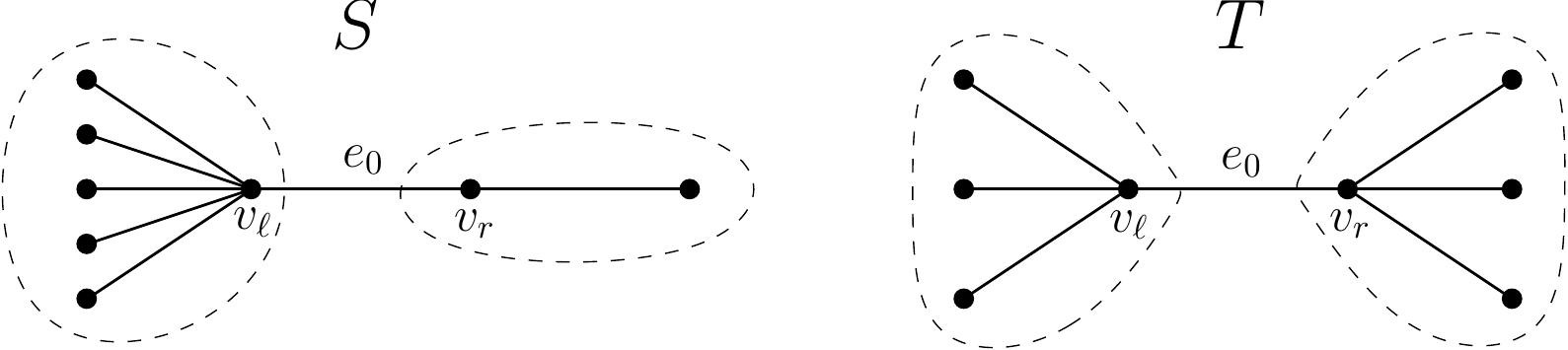}  \\[0.25cm]
   \caption{\textbf{Uniform attachment.} The sizes of the subtrees under $v_{\ell}$ and $v_r$ are unbalanced in $S$ but balanced in $T$, 
and this likely remains the case as the trees grow according to uniform attachment. 
This is at the root of why $S$ and $T$ are distinguishable as seed trees in the uniform attachment model.}
   \label{fig:ex1_uni}
  \end{subfigure}
  \caption{Distinguishing between two trees requires different approaches for the uniform and the preferential attachment models.}
  \label{fig:ex1}
\end{figure}

\medskip 

In the uniform attachment model the degrees of vertices do not play a special role. 
In particular, in the example of Figure~\ref{fig:ex1}, $v_\ell$ and $v_r$ will have approximately similar degrees in a large tree grown according to the uniform attachment model, irrespective of whether the seed tree is $S$ or $T$. 
Nonetheless, we are able to distinguish the seed trees $S$ and $T$, but the statistics we use to do this are based on more global balancedness properties of these trees. 

\medskip

An edge of a tree partitions the tree into two parts on either side of the edge. 
For most edges in a tree, this partition has very unbalanced sizes; for instance, if an edge is adjacent to a leaf, then one part contains only a single vertex. 
On the other hand, for edges that are in some sense ``central'' the partition is more balanced, in the sense that the sizes of the two parts are comparable. 
Intuitively, the edges of the seed tree will be among the ``most central'' edges of the uniform attachment tree at large times, 
and so we expect that the seed should influence the global balancedness properties of such trees.

\medskip

Consider again the example of the two seed trees $S$ and $T$ in Figure~\ref{fig:ex1}. 
The edge $e_0$ partitions the tree into two parts: a subtree under $v_{\ell}$ and a subtree under $v_r$. 
In $S$ these subtree sizes are unbalanced, and this likely remains the case as the tree grows according to uniform attachment. 
On the other hand, in $T$ the subtree sizes are equal, and they will likely remain balanced as the tree grows. 
Again, P\'olya urns play an important role, since the subtree sizes evolve according to a classical P\'olya urn initialized by the subtree sizes in the seed tree. 
The difference in the balancedness vs.\ unbalancedness of the subtree sizes is at the root of why $S$ and $T$ are distinguishable in the uniform attachment model. 
To prove Theorem~\ref{thm:main} we need to analyze statistics based on more general global balancedness properties of such trees, but the underlying intuition is what is described in the preceding paragraphs. 

\medskip

To formalize this intuition we essentially follow the proof scheme developed in~\cite{curien2014scaling}. 
However, the devil is in the details: since the underlying statistics are markedly different---in particular, statistics based on degrees are local, whereas those based on balancedness properties of subtree sizes are global---some of the essential steps of the proof become different. 
We provide a more detailed comparison to the work of~\cite{curien2014scaling} in Section~\ref{sec:curien}, after we present our proof. 

\subsection{Further related work} \label{sec:more_related} 

A tree with node set $\left[ n \right]:= \left\{ 1, \dots, n \right\}$ is called recursive if the node numbers along the unique path from $1$ to $j$ increase for every $j \in \left\{ 2, \dots, n \right\}$. 
A stochastic process of random growing trees where nodes are labeled according to the time they are born is thus a sequence of random recursive trees. 
If we choose a recursive tree with node set $\left[ n \right]$ uniformly at random, 
then the resulting tree has the same distribution as 
$\UA \left( n \right)$. 
A random tree grown according to the preferential attachment process starting from a single node is also known as a random plane-oriented recursive tree, see~\cite{Mah92}. 

\medskip

There is a large literature on random recursive trees and their various statistics; we refer the reader to the book by~\cite{drmota2009random}. 
Of particular interest to the question we study here are recent works on a boundary theory approach to the convergence of random recursive trees in the limit as the tree size goes to infinity, see~\cite{evans2012trickle} and~\cite{grubel2014random}. 
The main difference between these and the current work is that they consider labeled and rooted trees, 
whereas we are interested in 
what can be said about the seed given an unlabeled and unrooted copy of the tree.

\section{Partitions and their balancedness: simple examples} \label{sec:intuition} 

In this section we show on a simple example how to formalize the intuition described in Section~\ref{sec:related}. 
We define a simple statistic based on this intuition, and after collecting some preliminary facts in Section~\ref{sec:prelim}, we show in Section~\ref{sec:ex} that $\delta \left( P_4, S_4 \right) > 0$, where $P_4$ and $S_4$ are the path and the star on four vertices, respectively. 
We conclude the section by proving Theorem~\ref{th:arbstars} in Section~\ref{sec:stars}. 
The goal of this section is thus to provide a gentle introduction into the methods and statistics used to distinguish different seed trees, 
before analyzing more general statistics in the proof of Theorem~\ref{thm:main} in Section~\ref{sec:proofs}.

\medskip

For a tree $T$ and an edge $e \in E(T)$, let $T_1$ and $T_2$ be the two connected components of $T \setminus \{e\}$. Define
$$
g(T,e) = |T_1|^2|T_2|^2 / |T|^4,
$$
where $|T|$ denotes the number of vertices of $T$. 
Clearly, $0 \leq g \left( T, e \right) \leq 1/16$, and for ``peripheral'' edges $e$, $g \left( T, e \right)$ is closer to $0$, while for more ``central'' edges $e$, $g \left( T, e \right)$ is closer to $1/16$. 
Define the following statistic:
$$
\ST(T) = \sum_{e \in E(T)} g(T,e).
$$
The statistic $\ST\left( T\right)$ thus measures in a particular way the global balancedness properties of the tree $T$, and ``central'' edges contribute the most to this statistic.

\subsection{Preliminary facts}\label{sec:prelim}

For all $\alpha,\beta,n \in \mathbb{N}$, let $B_{\alpha,\beta,n}$ be a random variable such that $B_{\alpha, \beta, n} - \alpha$ has the beta-binomial distribution with parameters $(\alpha,\beta,n)$, i.e., it is a random variable satisfying
\[
\P\left(B_{\alpha,\beta,n} = \alpha + k\right) = \frac{(k+\alpha-1)! (n-k+\beta - 1)! (\alpha + \beta - 1)!}{ (n+\alpha+\beta - 1)! (\alpha - 1)! (\beta - 1)! }  \binom{n}{k}, ~~ \forall k \in \left\{0, 1, \dots, n \right\}.
\]
The key to understanding the statistic $\ST$ is the following distributional identity:
\begin{equation} \label{betabinomial}
g\left(\UA(n,S), e\right) \stackrel{d}{=} \frac{1}{n^4} B_{|T_1|,|T_2|,n-|S|}^2(n-B_{|T_1|,|T_2|,n-|S|})^2, ~~ \forall e \in E \left( S \right),
\end{equation}
where $T_1$ and $T_2$ are defined, given $e$, as above, and $\stackrel{d}{=}$ denotes equality in distribution. 
This is  an immediate consequence of the characterization of $\left( B_{\alpha,\beta,n}, n + \left( \alpha + \beta \right) - B_{\alpha, \beta, n} \right)$ as the distribution of a classical P\'olya urn with replacement matrix $\left(\begin{smallmatrix} 1 & 0 \\ 0 & 1 \end{smallmatrix}\right)$ and starting state $(\alpha,\beta)$ after $n$ draws. 
Similarly, for edges not in the seed $S$ we have
\begin{equation}\label{eq:betabinomial_late}
g \left(\UA(n,S), e_j \right) \stackrel{d}{=} \frac{1}{n^4} B_{1,j,n-j-1}^2 \bigl (n-B_{1,j,n-j-1} \bigr )^2, 
\end{equation}
where $e_j \in E \left( \UA \left(j+1, S \right) \right) \setminus E \left( \UA \left(j, S \right) \right)$, $j \in \left\{ \left| S \right|, \dots, n - 1 \right\}$.

\medskip

We use the following elementary facts about the beta-binomial distribution, which we prove in Appendix~\ref{app:beta}:
\begin{fact} \label{factbbmoments}
For every $p \geq 1$ there exists a constant $C(p)$ such that for all $\alpha, \beta, n$ such that $n \geq \alpha + \beta$, we have
\begin{equation} \label{eq:moments}
\left ( \EE[ B_{\alpha,\beta,n-\alpha-\beta}^p ] \right )^{1/p}  \leq C(p) n \frac{\alpha}{\alpha + \beta}.
\end{equation}
\end{fact}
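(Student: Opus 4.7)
The plan is to exploit the standard mixture representation of the beta-binomial distribution. Setting $m := n - \alpha - \beta$, I would write $B := B_{\alpha,\beta,m} = \alpha + K$, where, conditional on $P \sim \Beta(\alpha,\beta)$, the random variable $K$ is binomial with parameters $m$ and $P$. This follows from the P\'olya urn viewpoint invoked already in the paragraph preceding the fact (via de Finetti/Hewitt--Savage). In particular $\EE[B] = \alpha + m\,\alpha/(\alpha+\beta) = n\alpha/(\alpha+\beta)$, so the target bound \eqref{eq:moments} amounts to the assertion that $\|B\|_p$ is bounded by a constant (depending only on $p$) times $\EE[B]$.

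From here I would proceed in three short steps. First, Minkowski applied to the constant $\alpha$ and the random variable $K$ gives $\|B\|_p \leq \alpha + \|K\|_p$; the hypothesis $n \geq \alpha + \beta$ gives $\alpha \leq n\alpha/(\alpha+\beta)$, so the deterministic shift is already of the right order. Second, for the conditional binomial moment I would expand $K^p$ in falling factorials using the Stirling numbers $S(p,j)$ and apply $\EE[K(K-1)\cdots(K-j+1) \mid P] = m(m-1)\cdots(m-j+1)P^j \leq (mP)^j$, obtaining a bound of the form
\[
\EE[K^p \mid P] \leq C_p \bigl( (mP)^p + mP \bigr).
\]
Third, integrating over $P$ I would use $\EE[P] = \alpha/(\alpha+\beta)$ exactly together with
\[
\EE[P^p] = \prod_{k=0}^{p-1} \frac{\alpha+k}{\alpha+\beta+k} \leq \left( \frac{2p\,\alpha}{\alpha+\beta} \right)^p,
\]
the last inequality following by splitting into the cases $\alpha \geq p$ and $\alpha < p$ (in the latter using $\alpha \geq 1$). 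Plugging in and using $m \leq n$ yields the target estimate after taking $p$-th roots.

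The subtle point, in my view, is the precise form of the binomial moment estimate in the second step: a cleaner-looking bound such as $\EE[K^p \mid P] \lesssim_p (mP)^p$ fails in the regime where $mP$ is small (e.g.\ $\alpha = 1$ and $\beta$ of order $n$, so $\EE[P] \sim 1/n$), and retaining the linear tail $mP$ is essential. This linear term contributes a $\bigl(n\alpha/(\alpha+\beta)\bigr)^{1/p}$ piece after taking $p$-th roots, and the hypothesis $n \geq \alpha+\beta$ is used once more at this final step: it ensures $n\alpha/(\alpha+\beta) \geq \alpha \geq 1$, which is what lets us absorb the $1/p$-power into the full quantity at the cost of an additional multiplicative constant.
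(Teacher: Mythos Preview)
Your proof is correct, but it takes a different route from the paper's. The paper observes that $M_n := B_{\alpha,\beta,n-\alpha-\beta}/n$ is the P\'olya urn martingale with limit $M_\infty \sim \Beta(\alpha,\beta)$; since $x \mapsto x^p$ is convex, $M_n^p$ is a submartingale and hence $\EE[M_n^p] \leq \EE[M_\infty^p]$. The moments of the Beta distribution then give $\EE[M_\infty^p] \leq ((p+1)\alpha/(\alpha+\beta))^p$ directly, yielding the explicit constant $C(p) = p+1$ with no computation of binomial moments at all. Your approach via the mixture representation, Stirling-number expansion of the binomial moments, and the Beta moment bound is more hands-on; it is entirely valid and arguably more self-contained for a reader who does not want to invoke the submartingale structure, but the resulting constant involves Bell numbers and the argument is longer. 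The submartingale trick is what lets the paper avoid the ``small $mP$'' regime that you correctly flag as delicate: by passing to $M_\infty$ one never has to separate the linear and top-degree contributions.
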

\begin{fact} \label{factbbsmall}
There exists a universal constant $C>0$ such that whenever $\alpha, \beta \geq 1$, $n \geq \alpha + \beta$, and $t \geq 0$, we have
\begin{equation} \label{eq:fact2}
 \P \left ( B_{\alpha,\beta,n-\alpha-\beta} < t n \frac{\alpha}{\alpha + \beta} \right ) \leq C t.
\end{equation}
\end{fact}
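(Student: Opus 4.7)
My plan is to identify $B_{\alpha,\beta,n-\alpha-\beta}$ in distribution with the $\alpha$-th order statistic of a uniform random sample, after which the tail bound reduces to Markov's inequality applied to a hypergeometric count.

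The key step is to establish the distributional identity
\[
B_{\alpha,\beta,n-\alpha-\beta} \stackrel{d}{=} U_{(\alpha)},
\]
where $U_{(1)} < \cdots < U_{(\alpha+\beta-1)}$ denote the order statistics of an $(\alpha+\beta-1)$-element subset drawn uniformly at random (without replacement) from $\{1, \ldots, n-1\}$. This can be verified directly from the beta-binomial PMF given just before~\eqref{betabinomial}: substituting $\Gamma(k)=(k-1)!$ and rearranging factorials, the PMF simplifies to
\[
\P\left(B_{\alpha,\beta,n-\alpha-\beta} = k\right) = \binom{k-1}{\alpha-1}\binom{n-k-1}{\beta-1}\bigg/\binom{n-1}{\alpha+\beta-1},
\]
which is manifestly the PMF of $U_{(\alpha)}$. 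A more conceptual route uses a refinement of the P\'olya urn interpretation noted after~\eqref{betabinomial}: running the unit-replacement urn with $\alpha+\beta$ distinct colors (one ball per color) for $n-\alpha-\beta$ steps yields a count vector that is uniform over positive compositions of $n$ into $\alpha+\beta$ parts (a short induction on the number of steps); merging the first $\alpha$ colors recovers the original two-color urn, while the stars-and-bars bijection identifies the sum of the first $\alpha$ parts with $U_{(\alpha)}$.

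Given the identification, the bound follows immediately. Set $s = tn\alpha/(\alpha+\beta)$, and let $X$ denote the (hypergeometric) count of sampled points lying in $\{1, \ldots, \lceil s \rceil -1\}$, so that $\EE[X] \leq (\alpha+\beta-1)s/(n-1)$. Since $\{B_{\alpha,\beta,n-\alpha-\beta} < s\} = \{U_{(\alpha)} < s\} = \{X \geq \alpha\}$, Markov's inequality gives
\[
\P(X \geq \alpha) \leq \frac{\EE[X]}{\alpha} \leq \frac{\alpha+\beta-1}{\alpha+\beta}\cdot\frac{n}{n-1}\cdot t \leq t,
\]
where the last step uses $n \geq \alpha+\beta$, which is equivalent to $(\alpha+\beta-1)n \leq (\alpha+\beta)(n-1)$. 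Thus the fact holds with $C = 1$.

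The only substantive step is the distributional identity; I do not anticipate any real obstacle, as it is a classical statement provable either by the brief PMF computation above or by the stars-and-bars argument. Once that is in place, the proof is a one-line Markov estimate.
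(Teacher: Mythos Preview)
Your proof is correct and takes a genuinely different route from the paper's. The paper works with the P\'olya-urn martingale $M_k$ (so that $nM_n \stackrel{d}{=} B_{\alpha,\beta,n-\alpha-\beta}$), passes to the limit $M_\infty \sim \mathrm{Beta}(\alpha,\beta)$, bounds $\P(M_\infty < z)$ via the negative first moment of the Beta (with a separate density argument when $\alpha=1$), and then transfers the bound from $M_\infty$ back to $M_n$ using a positive-martingale argument. This yields $C = 8$.

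Your approach instead exploits the exact finite-$n$ combinatorics: the identification of $B_{\alpha,\beta,n-\alpha-\beta}$ with the $\alpha$-th order statistic of a uniform $(\alpha+\beta-1)$-subset of $\{1,\dots,n-1\}$ turns the tail event into $\{X \geq \alpha\}$ for a hypergeometric $X$, after which a single Markov step gives the bound with $C=1$. This is shorter, avoids the limit-and-transfer manoeuvre (and the case split at $\alpha=1$), and delivers a sharper constant. The paper's argument, on the other hand, makes transparent the connection to the Beta limit that drives much of the intuition elsewhere in Section~\ref{sec:intuition}, and would adapt more readily to non-integer parameters.
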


\subsection{A simple example}\label{sec:ex}

After these preliminaries we are now ready to show that $\delta \left( P_4, S_4 \right) > 0$. 
To abbreviate notation, in the following we write simply $P \equiv P_4$ and $S\equiv S_4$. 
In order to show that $\delta \left( P, S \right) > 0$, it is enough to show two things:
\begin{equation} \label{eq:tvexp}
\liminf_{n \to \infty } \left| \EE \left[\ST\left(\UA\left(n, P\right)\right)\right] - \EE\left[\ST\left(\UA\left(n, S\right)\right)\right] \right| > 0,
\end{equation}
and
\begin{equation} \label{eq:tvvar}
\limsup_{n \to \infty } \left( \Var[\ST(\UA(n, P))] + \Var[\ST(\UA(n, S))] \right) < \infty.
\end{equation}
The proof can then be concluded using the Paley-Zigmund inequality (for more detail on this point, see the proof of Theorem~\ref{thm:main} in Section~\ref{sec:obs}). 

\medskip

For $j \geq 4$, let $e_j^P$ denote the edge in $\UA \left( j+1, P \right) \setminus \UA \left( j, P \right)$, and define $e_j^S$ similarly. 
Towards~\eqref{eq:tvexp}, we first observe that
\[
 g \left( \UA \left( n, P \right) , e_j^P \right) \stackrel{d}{=} g \left( \UA \left( n, S \right) , e_j^S \right), ~~ \forall j \in \left\{4, \dots, n - 1 \right\}.
\]
Consequently, we have
\[
\EE \left[\ST \left(\UA \left(n, P \right) \right) \right] - \EE \left[\ST \left(\UA \left(n, S \right) \right) \right] = \sum_{e \in P} \EE \left[g \left( \UA \left(n,P \right), e \right) \right] - \sum_{e \in S} \EE \left[g \left( \UA \left(n,S\right), e \right)\right].
\]
Moreover, note that $P$ has two edges, $e_1$ and  $e_2$, such that $P \setminus \{e_i\}$ has two connected components of sizes $1$ and $3$ for $i=1,2$. Since this is true for all edges of the star $S$, we conclude that
\[
\EE[\ST(\UA(n, P))] - \EE[\ST(\UA(n, S))] = \EE[g(\UA(n,P), e_3)] - \EE[g(\UA(n,S), e_1)],
\]
where $e_3$ is the remaining edge of $P$ (i.e., the middle edge of the path). Using~\eqref{betabinomial}, we thus have
\begin{multline*}
 \EE[\ST(\UA(n, P))] - \EE[\ST(\UA(n, S))] \\
\begin{aligned} 
 &= \frac{1}{n^4} \left(\EE \left[ B_{2,2,n-4}^2 \left(n - B_{2,2,n-4} \right)^2 \right] - \EE \left[ B_{1,3,n-4}^2 \left(n - B_{1,3,n-4} \right)^2  \right] \right) \\
&= \frac{2n^3+5n^2+8n+5}{140n^3},
\end{aligned}
\end{multline*}
where the last equality is attained via a straightforward calculation using explicit formulae for the first four moments of the beta-binomial distribution. We see that 
\[
\lim_{n \to \infty} \left( \EE[\ST(\UA(n, P))] - \EE[\ST(\UA(n, S))] \right) = \frac{1}{70} \neq 0,
\]
which establishes~\eqref{eq:tvexp}. 

\medskip

It remains to prove~\eqref{eq:tvvar}. We show now that $\limsup_{n \to \infty} \Var[\ST(\UA(n, P))] < \infty$; the proof that $\limsup_{n \to \infty} \Var[\ST(\UA(n, S))] < \infty$ is identical. To abbreviate notation, write $T_n$ for $\UA \left(n, P \right)$. 
Similarly as above, for $j \geq 4$ let $e_j$ be the edge in $T_{j+1} \setminus T_j$, and let $e_1$, $e_2$, and $e_3$ be the edges of $P$ in some arbitrary order. 
Using Cauchy-Schwarz we have that 
\begin{equation} \label{eq:cs}
\Var[\ST(T_n)] \leq \left ( \sum_{j=1}^{n-1} \sqrt{\Var[g(T_n, e_j)]} \right )^2.
\end{equation}
For any edge $e_i$ we clearly have $0 \leq g(T_n,e_i) \leq 1$, and so 
\begin{equation} \label{eqfirstterms}
\sum_{j=1}^3 \sqrt{\Var[g(T_n, e_j)]} \leq 3.
\end{equation}
Next, fix $4 \leq j \leq n-1$. Using formula~\eqref{eq:betabinomial_late} we know that
$$
g(T_{n}, e_j) \stackrel{d}{=} \frac{1}{n^4} B_{1,j,n-j-1}^2(n-B_{1,j,n-j-1})^2.
$$
The estimate~\eqref{eq:moments} yields $\EE[B_{1,j,n-j-1}^4] \leq C n^4 / j^4$, 
where $C>0$ is a universal constant. Consequently, we have 
$$
\EE[g(T_{n}, e_j)^2] \leq C/j^4, ~~ \forall j \in \left\{ 4, \dots, n-1 \right\},
$$
which, in turn, implies that 
$$
\sqrt{\Var[g(T_n, e_j)]} \leq C/j^2.
$$
Plugging this inequality and~\eqref{eqfirstterms} into~\eqref{eq:cs} establishes~\eqref{eq:tvvar}. This completes the proof of $\delta \left( P_4, S_4 \right) > 0$. 

\medskip

The statistic $\ST \left( \cdot \right)$ cannot distinguish between all pairs of non-isomorphic trees; however, appropriate generalizations of it can. 
An alternative description of $\ST \left( \cdot \right)$ is as follows. 
Let $\tau$ be a tree consisting of two vertices connected by a single edge. 
Up to normalization, the quantity $G\left( T \right)$ is equal to the sum over all embeddings $\phi : \tau \to T$ 
of the product of the squares of the connected components of $T \setminus \phi \left( \tau \right)$. 
A natural generalization of this definition is to take $\tau$ to be an arbitrary finite tree. 
Moreover, we can assign natural numbers to each vertex of $\tau$, which determines the power to which we raise the size of the respective connected components. 
In this way we obtain a family of statistics associated with so-called \emph{decorated trees}. 
It turns that this generalized family of statistics can indeed distinguish between any pair of non-isomorphic trees; for details see Section~\ref{sec:proofs}.

\subsection{Distinguishing large stars: a proof of Theorem~\ref{th:arbstars}}\label{sec:stars}

In the following we first give an upper bound on the probability that $G \left( \UA \left(n, T \right) \right)$ is small, and then we give an upper bound on the probability that $G\left( \UA \left( n, S_k \right) \right)$ is not too small. The two together will prove Theorem~\ref{th:arbstars}.

\medskip

First, fix a tree $T$ and choose an arbitrary edge $e_1 \in E \left( T \right)$. 
Let $T_n'$ and $T_n''$ be the two connected components of $\UA(n, T) \setminus \{e_1\}$, 
defined consistently such that $T_j' \subset T_{j+1}'$ and $T_j'' \subset T_{j+1}''$ (and otherwise the order is chosen arbitrarily). 
We have
\[
 g(\UA(n,T), e_1) = \frac{1}{n^4} |T_n'|^2 |T_n''|^2 = \frac{1}{n^4} |T_n'|^2 (n - |T_n'|)^2.
\]
By equation~\eqref{betabinomial} we have 
\[
 |T_n'| \stackrel{d}{=} B_{a,|T|-a,n-|T|}^2,
\]
where $a := |T_{|T|}'| \geq 1$. Using Fact~\ref{factbbsmall} we then have that for all $t > 0$, 
\begin{equation}\label{eq:bd_near_zero}
 \P \left ( \frac{|T_n'|^2}{n^2} < t \frac{1}{|T|^2}  \right ) \leq \P \left ( \frac{|T_n'|^2}{n^2} < t \frac{a^2}{|T|^2}  \right ) = \P \left ( \frac{B_{a,|T|-a,n-|T|}^2}{n^2} < t \frac{a^2}{|T|^2}  \right ) \leq C \sqrt{t}.
\end{equation}
Consider the event $E_n = \{|T_n'| \leq n/2 \}$ and note that if $E_n$ holds then $g \left( \UA \left( n, T \right), e_1 \right) \geq \left| T_n' \right|^2 / \left( 4n^2 \right)$. 
This, together with~\eqref{eq:bd_near_zero}, gives
\[
 \P \left( E_n \cap \left\{ g \left( \UA \left(n, T \right), e_1 \right) < t \frac{1}{4\left| T \right|^2} \right\} \right) \leq C \sqrt{t}, ~~ \forall t > 0.
\]
By repeating the above argument with $T_n''$ instead of $T_n'$, we also have
\[
 \P \left( E_n^C \cap \left\{ g \left( \UA \left(n, T \right), e_1 \right) < t \frac{1}{4\left| T \right|^2} \right\} \right) \leq C \sqrt{t}, ~~ \forall t > 0,
\]
and thus we conclude that
\[
 \P \left( g \left( \UA \left(n, T \right), e_1 \right) < t \frac{1}{4 \left| T \right|^2} \right) \leq 2 C \sqrt{t}, ~~ \forall t > 0.
\]
Now since $\ST \left( \UA \left(n, T \right) \right) \geq g \left( \UA \left( n, T \right), e_1 \right)$, and by setting $z = t / \left( 4 \left| T \right|^2 \right)$, we finally get that
\begin{equation}\label{STlowerbound}
 \P \left( \ST \left( \UA \left( n, T \right) \right) < z \right) \leq 4 C \sqrt{z} \left| T \right|, ~~ \forall z > 0.
\end{equation}

\medskip

In order to understand the distribution of $\ST \left( \UA \left( n, S_k \right) \right)$ we first estimate its mean:
\begin{multline*}
\E \left[ \ST \left( \UA \left( n, S_k \right) \right) \right] \\
\begin{aligned}
 &= \frac{k-1}{n^4} \E \left[ B_{1,k-1,n-k}^2 \left( n - B_{1,k-1,n-k} \right)^2 \right] + \sum_{j=k}^{n-1} \frac{1}{n^4} \E \left[ B_{1,j,n-j-1}^2 \left( n - B_{1,j,n-j-1} \right)^2 \right] \\
 &\leq \frac{k-1}{n^2} \E \left[ B_{1,k-1,n-k}^2  \right] + \frac{1}{n^2} \sum_{j=k}^{n-1} \E \left[ B_{1,j,n-j-1}^2 \right] \stackrel{\eqref{eq:moments}}{\leq} \frac{C'}{k} + \sum_{j=k}^{n-1} \frac{C'}{j^2} \leq \frac{3C'}{k}
\end{aligned}
\end{multline*}
for some absolute constant $C'$. 
Now using Markov's inequality with this estimate, and also taking $z = 3C' / \sqrt{k}$ in the inequality~\eqref{STlowerbound}, we get that 
\[
 \P \left( G \left( \UA \left( n, S_k \right) \right) \geq 3C' / \sqrt{k} \right) \leq \frac{1}{\sqrt{k}} \qquad \text{and} \qquad  \P \left( G \left( \UA \left( n, T \right) \right) < 3C' / \sqrt{k} \right) \leq \frac{C''}{k^{1/4}}
\]
for some absolute constant $C''$. This then immediately implies that $\delta \left( S_k, T \right) \to 1$ as $k \to \infty$.

\section{Proof of Theorem~\ref{thm:main}} \label{sec:proofs} 

After the intuition and simple examples provided in Section~\ref{sec:intuition}, in this section we fully prove Theorem~\ref{thm:main}. 
As mentioned in Section~\ref{sec:related}, the proof shares some features with the proof in~\cite{curien2014scaling} for preferential attachment, but is different in several ways. 
These differences are discussed in detail after the proof, in Section~\ref{sec:curien}.

\medskip

\textbf{Notation.} 
For a graph $G$, denote by $V \left( G \right)$ the set of its vertices, by $E \left( G \right)$ the set of its edges, and by $\diam \left( G \right)$ its diameter. For brevity, we often write $v \in G$ instead of $v \in V \left(G \right)$.  
For integers $k,j \geq 1$, define the descending factorial $\left[k \right]_j = k \left( k - 1 \right) \dots\left( k - j + 1 \right)$, and also let $\left[k \right]_0 = 1$. 
For two sequences of real numbers $\left\{ a_n \right\}_{n \geq 0}$ and $\left\{ b_n \right\}_{n \geq 0}$, we write $a_n \simless b_n$ (to be read as $a_n$ is less than $b_n$ up to $\log$ factors) if there exist constants $c > 0$, $\gamma \in \R$, and $n_0$ such that $\left| a_n \right| \leq c \left( \log \left( n \right) \right)^{\gamma} \left| b_n \right|$ for all $n \geq n_0$. 
For a sequence $\left\{ a_n \right\}_{n \geq 0}$ of real numbers, define $\Delta_n a = a_{n+1} - a_n$ for $n \geq 0$.

\subsection{Decorated trees}\label{sec:dec_trees}

A \emph{decorated tree} is a pair $\utau = \left( \tau, \ell \right)$ consisting of a tree $\tau$ and a family of nonnegative integers $\left( \ell \left( v \right) ; v \in \tau \right)$, called labels, associated with its vertices; see Figure~\ref{fig:dec_tree} for an illustration. 
Let $\cD$ denote the set of all decorated trees, 
$\cD_{+}$ the set of all decorated trees where every label is positive, 
$\cD_{0}$ the set of all decorated trees where there exists a label which is zero, and 
finally let $\cD_0^*$ denote the set of all decorated trees where there exists a leaf which has label zero. 

\medskip

Define $\left| \utau \right|$ to be the number of vertices of $\tau$, and let $w\left( \utau \right) := \sum_{v \in \tau} \ell \left( v \right)$ denote the total weight of $\utau$. 
For $\utau, \utau' \in \cD$, let $\utau \prec \utau'$ if $\left| \utau \right| < \left| \utau' \right|$ and $w \left( \utau \right) \leq w \left( \utau' \right)$ or $\left| \utau \right| \leq \left| \utau' \right|$ and $w \left( \utau \right) < w \left( \utau' \right)$. 
This defines a strict partial order $\prec$, and let $\preccurlyeq$ denote the associated partial order, 
i.e., $\utau \preccurlyeq \utau'$ if and only if $\utau \prec \utau'$ or $\utau = \utau'$.

\medskip

For $\utau \in \cD$, let $L \left( \utau \right)$ denote the set of leaves of $\tau$, let $L_0 \left( \utau \right) := \left\{ v \in L \left( \utau \right) : \ell \left( v \right) = 0 \right\}$, $L_1 \left( \utau \right) := \left\{ v \in L \left( \utau \right) : \ell \left( v \right) = 1 \right\}$, and $L_{0,1} \left( \utau \right) := L_0 \left( \utau \right) \cup L_1 \left( \utau \right)$. 
For $\utau \in \cD$ and $v \in L \left( \utau \right)$, define $\utau_v$ to be the same as $\utau$ except the leaf $v$ and its label are removed. 
For $\utau \in \cD$ and a vertex $v \in \utau$ such that $\ell \left( v \right) \geq 2$, define $\utau_v'$ to be the same as $\utau$ except the label of $v$ is decreased by one, i.e., $\ell_{\utau_v'} \left( v \right) = \ell_{\utau} \left( v \right) - 1$.

\begin{figure}[h!]
  \centering
    \includegraphics[width=0.67\textwidth]{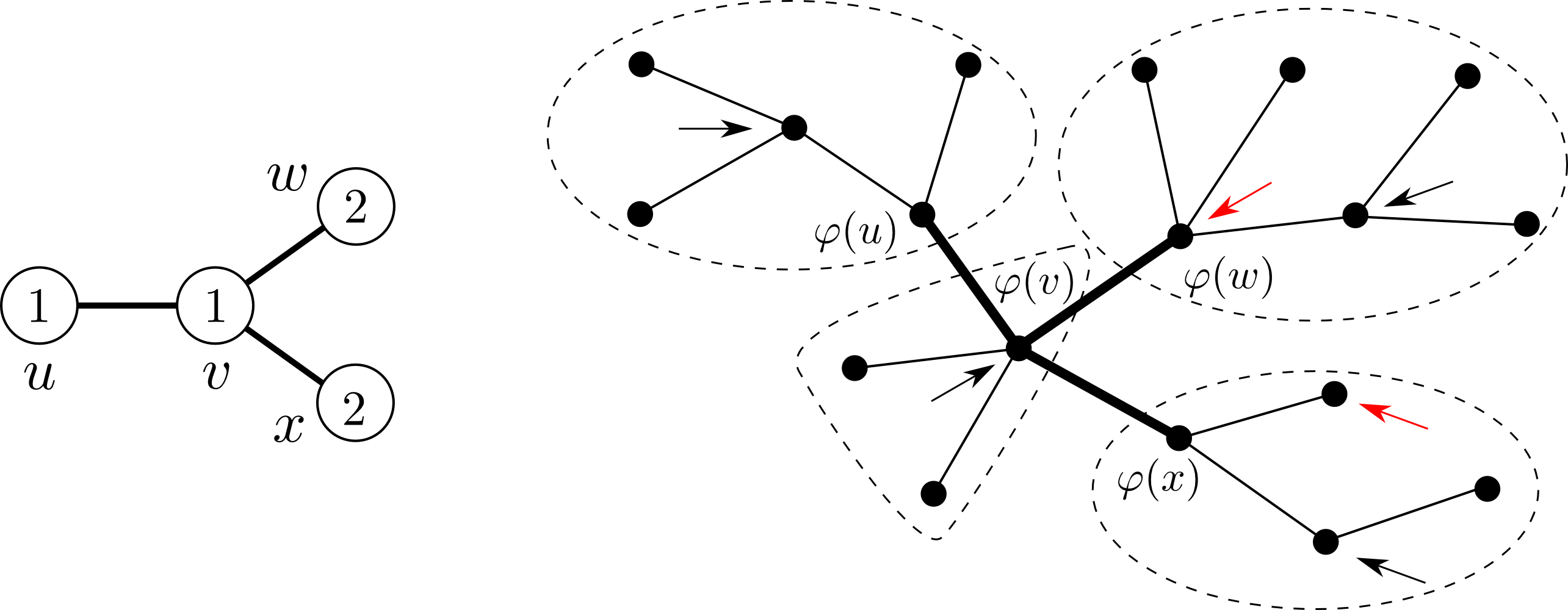}
  \caption{\textbf{A decorated tree and a decorated embedding.} 
On the left is a decorated tree $\utau = \left( \tau, \ell \right)$ with four vertices, two of them having label $1$, and two of them having label $2$. 
On the right is a larger tree $T$, and an embedding $\phi : \tau \to T$ depicted in bold. 
The connected components of the forest $\wh{T} \left( T, \tau, \phi \right)$ are circled with dashed lines, with the component sizes being $f_{\phi\left( u \right)} \left( T \right) = 5$, $f_{\phi\left( v \right)} \left( T \right) = 3$, $f_{\phi\left( w \right)} \left( T \right) = 6$, and $f_{\phi\left( x \right)} \left( T \right) = 4$. 
A decorated embedding $\uphi$ is also depicted, which consists of the embedding $\phi$ together with the mapping of $w\left( \utau \right) = 6$ arrows to vertices of $T$. 
The arrows in each subtree are distinguishable, which is why they are depicted using different colors.}
  \label{fig:dec_tree}
\end{figure}

\subsection{Statistics and distinguishing martingales}\label{sec:obs}

Given two trees $\tau$ and $T$, a map $\phi : \tau \to T$ is called an \emph{embedding} if $\phi$ is an injective graph homomorphism. That is, $\phi$ is an injective map from $V\left( \tau \right)$ to $V \left( T \right)$ such that $\left\{ u,v \right\} \in E \left( \tau \right)$ implies that $\left\{ \phi \left( u \right), \phi \left( v \right) \right\} \in E \left( T \right)$. 

\medskip

For two trees $\tau$ and $T$, and an embedding $\phi : \tau \to T$, denote by $\wh{T} = \wh{T} \left( T, \tau, \phi \right)$ the forest obtained from $T$ by removing the images of the edges of $\tau$ under the embedding $\phi$; see Figure~\ref{fig:dec_tree} for an illustration. 
Note that the forest $\wh{T}$ consists of exactly $\left|\tau\right|$ trees, and each tree contains exactly one vertex which is the image of a vertex of $\tau$ under $\phi$. 
For $v \in \tau$, denote by $f_{\phi\left( v \right)} \left( T \right)$ the number of vertices of the tree in $\wh{T}$ which contains the vertex $\phi \left( v \right)$. 
Using this notation, for a decorated tree $\utau \in \cD$ define
\begin{equation}\label{eq:main_stat}
 F_{\utau} \left( T \right) = \sum_{\phi} \prod_{v \in \tau} \left[ f_{\phi \left( v \right)} \left( T \right) \right]_{\ell \left( v \right)},
\end{equation}
where the sum is over all embeddings $\phi : \tau \to T$. 
If there are no such embeddings then $F_{\utau} \left( T \right) = 0$ by definition. 
Note that if $\utau$ consists of a single vertex with label $k \geq 0$, then $F_{\utau} \left( T \right) = |T| \times \left[ \left| T \right| \right]_k$.

\medskip

The quantity $F_{\utau} \left( T \right)$ has a combinatorial interpretation which is useful: it is the number of \emph{decorated embeddings} of $\utau$ in $T$, defined as follows. 
Imagine that for each vertex $v \in \tau$ there are $\ell \left( v \right)$ distinguishable (i.e., ordered) arrows pointing to $v$. 
A decorated embedding $\uphi$ is an embedding $\phi$ of $\tau$ in $T$, together with a mapping of the arrows to vertices of $T$ in such a way that each arrow pointing to $v \in \tau$ is mapped to a vertex in the tree of $\wh{T}$ that contains $\phi \left( v \right)$, with distinct arrows mapped to distinct vertices. 
See Figure~\ref{fig:dec_tree} for an illustration.

\medskip

The quantities $F_{\utau}$ are also more amenable to analysis than other statistics, because their expectations satisfy recurrence relations, as described in Section~\ref{sec:recurrence}. 
Using the statistics $F_{\utau}$ it is possible to create martingales that distinguish between different seeds.

\begin{proposition}\label{prop:mg}
 Let $\utau \in \cD_+$. There exists a family of constants $\left\{ c_n \left( \utau, \utau' \right) : \utau' \in \cD_+, \utau' \preccurlyeq \utau, n \geq 2 \right\}$ with $c_n \left( \utau, \utau \right) > 0$ such that for every seed tree $S$, the process $\left\{ M_{\utau}^{\left( S \right)} \left( n \right) \right\}_{n \geq \left|S\right|}$ defined by
\[
 M_{\utau}^{\left( S \right)} \left( n \right) = \sum_{\utau' \in \cD_+ : \utau' \preccurlyeq \utau} c_n \left( \utau, \utau' \right) F_{\utau'} \left( \UA \left( n, S \right) \right)
\]
is a martingale with respect to the natural filtration $\cF_n = \sigma \left\{ \UA \left( |S|, S \right), \dots, \UA \left( n, S \right) \right\}$ and is bounded in $L^2$.
\end{proposition}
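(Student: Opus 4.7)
My plan is to derive a one-step recurrence for $\E[F_\utau \mid \cF_n]$ that is triangular with respect to $\prec$, to solve the linear backward system this imposes on the $c_n(\utau,\utau')$'s, and to verify $L^2$ boundedness by an induction on $\prec$.

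First I would compute $\E[F_\utau(\UA(n+1,S)) \mid \cF_n]$ by splitting embeddings of $\tau$ into $\UA(n+1,S)$ according to whether the newly attached vertex $u$ is in the image of $\phi$. If $u\notin\phi(V(\tau))$ then $\phi$ is already an embedding into $\UA(n,S)$, the attachment vertex $v^*$ lands in the side-tree containing $\phi(v)$ with probability $f_{\phi(v)}/n$, and only the factor $[f_{\phi(v)}]_{\ell(v)}$ in the product changes. Applying the identities $[f+1]_\ell - [f]_\ell = \ell[f]_{\ell-1}$ and $f[f]_{\ell-1} = [f]_\ell + (\ell-1)[f]_{\ell-1}$, this family of embeddings contributes $\frac{w(\utau)}{n}F_\utau + \frac{1}{n}\sum_{v:\ell(v)\geq 2}\ell(v)(\ell(v)-1)F_{\utau_v'}$. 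If instead $u\in\phi(V(\tau))$, then $u$ must be the image of a leaf $v$ of $\tau$, $v^*$ must be the image of the unique neighbor of $v$ in $\tau$, and the resulting forest has $f_u=1$; since $[1]_k = 0$ for $k\geq 2$, only $v\in L_1(\utau)$ contributes, producing $\frac{1}{n}\sum_{v\in L_1(\utau)}F_{\utau_v}$. The resulting recurrence
\[
\E\bigl[F_\utau(\UA(n+1,S))\,\bigm|\,\cF_n\bigr] = \Bigl(1 + \tfrac{w(\utau)}{n}\Bigr) F_\utau(\UA(n,S)) + \tfrac{1}{n}\sum_{\utau' \prec \utau}\alpha(\utau,\utau') F_{\utau'}(\UA(n,S)),
\]
with nonnegative integer coefficients $\alpha(\utau,\utau')$, is triangular in $\prec$ because every $\utau'$ appearing satisfies $w(\utau')=w(\utau)-1<w(\utau)$.

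Next I would define the $c_n$'s by backward induction along $\prec$. Set $c_n(\utau,\utau) = \prod_{k=|S|}^{n-1}(1+w(\utau)/k)^{-1}>0$, which behaves like $n^{-w(\utau)}$ up to constants. For each $\utau'\prec\utau$, taken in increasing $\prec$-order, I require the coefficient of $F_{\utau'}(\UA(n,S))$ in $\E[M_\utau^{(S)}(n+1)\mid\cF_n] - M_\utau^{(S)}(n)$ to vanish; this yields a scalar linear recurrence whose forcing involves $c_{n+1}(\utau,\utau'')$ for $\utau'\prec\utau''\preccurlyeq\utau$. By the inductive estimate $c_n(\utau,\utau'')\simless n^{-w(\utau'')}$ and the $1/n$ in the recurrence, the forcing decays like $n^{-(1+w(\utau)-w(\utau'))}$; since $w(\utau)>w(\utau')$ strictly, the tail sum converges, yielding a well-defined $c_n(\utau,\utau')\simless n^{-w(\utau')}$.

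The main obstacle is the $L^2$ bound. By orthogonality of martingale increments, it suffices to show $\sum_{k\geq|S|}\E[(\Delta_k M_\utau^{(S)})^2]<\infty$. To bound the individual increments I would establish, by induction on $\prec$, the estimate $\E[F_{\utau'}(\UA(n,S))^2]\simless n^{2w(\utau')}$. The crucial identity here is that $F_{\utau'}(T)\cdot F_{\utau''}(T) = F_{\utau'\sqcup\utau''}(T) + R$, where $F_{\utau'\sqcup\utau''}$ denotes the statistic associated with the disjoint union of decorated trees (of weight $w(\utau')+w(\utau'')$) and $R$ collects pairs of embeddings with overlapping images, which are of strictly smaller $\prec$-order and controlled inductively. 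Applying the Step~1 recurrence to $\utau'\sqcup\utau''$ yields $\E[F_{\utau'\sqcup\utau''}(\UA(n,S))]\simless n^{w(\utau')+w(\utau'')}$, and combining with the overlap correction gives $\E[F_{\utau'}^2]\simless n^{2w(\utau')}$. Combining with $c_n(\utau,\utau')\simless n^{-w(\utau')}$ and the factor $1/n$ present in each one-step perturbation produces $\E[(\Delta_k M_\utau^{(S)})^2]\simless k^{-2}$ up to logarithmic factors, which is summable and completes the proof.
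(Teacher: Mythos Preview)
Your recurrence derivation and the inductive construction of the coefficients $c_n(\utau,\utau')$ match the paper's approach (one small slip: your $c_n(\utau,\utau)=\prod_{k=|S|}^{n-1}(1+w(\utau)/k)^{-1}$ depends on $|S|$, contradicting the requirement that the constants work for every seed; start the product at a fixed index instead). The real gap is in the $L^2$ bound.

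The decomposition $F_{\utau'}^2 = F_{\utau'\sqcup\utau'} + R$ does not close. First, $\utau'\sqcup\utau'$ is a forest, not a tree, and neither the definition of $F_\usigma$ nor your Step~1 recurrence applies to forests: removing the edges of a two-component embedding from $T$ yields only $|\usigma|-1$ components, so the side-tree bookkeeping that makes the recurrence work for trees collapses. Handling pairs of embeddings with \emph{disjoint} images is precisely the hard case; the paper resolves it by joining $\phi_1(\tau)$ and $\phi_2(\tau)$ via the unique path between them in $T_n$, producing a genuine decorated tree $\usigma$ of size up to $\diam(T_n)$, and then invoking $\diam(\UA(n,S))=O(\log n)$ with high probability so that only polylogarithmically many shapes $\usigma$ arise. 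Second, even the overlap term $R$ is not covered by what you have: because the arrows are \emph{global} (they may land on any vertex of $T$), merging two decorated embeddings can create internal vertices of $\usigma$ with label zero, so $\usigma\notin\cD_+$. Your recurrence and first-moment bound are stated only for $\cD_+$; the paper extends both to all of $\cD$ and shows that the relevant $\usigma$ never have zero-label \emph{leaves}, which is what keeps $\E[F_\usigma]\simless n^{w(\usigma)}$. Finally, getting $\E[(\Delta_k M_\utau)^2]\simless k^{-2}$ requires $\E[(\Delta_n F_{\utau'})^2]\simless n^{2w(\utau')-2}$, which does not follow from $\E[F_{\utau'}^2]\simless n^{2w(\utau')}$ plus a ``$1/n$ in each perturbation''; the paper proves this increment estimate by a separate combinatorial argument tracking decorated embeddings that must use the newly added vertex.
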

Note that in the construction of these martingales we only use decorated trees where every label is positive. 
As we shall see, we analyze decorated trees having a label which is zero in order to show that the martingales above are bounded in $L^2$. 
See Sections~\ref{sec:moment} and~\ref{sec:curien} for more details and discussion on this point.

\medskip

We now prove Theorem~\ref{thm:main} using Proposition~\ref{prop:mg}, which we then prove in the following subsections.

\medskip

\begin{proof}\textbf{of Theorem~\ref{thm:main}}
 Let $S$ and $T$ be two non-isomorphic trees with at least three vertices, and let $n_0 := \max \left\{ \left|S\right|, \left| T \right| \right\}$. 
First we show that there exists $\utau \in \cD_+$ such that 
\begin{equation}\label{eq:mean_diff}
 \E \left[ F_{\utau} \left( \UA \left( n_0, S \right) \right) \right] \neq \E \left[ F_{\utau} \left( \UA \left( n_0, T \right) \right) \right].
\end{equation}
Assume without loss of generality that $\left|S\right| \leq \left| T \right|$, and let $\utau$ be equal to $T$ with labels $\ell \left( v \right) = 1$ for all $v\in T$. 
Then for every tree $T'$ with $\left| T' \right| = \left| T \right|$ we have $F_{\utau} \left( T' \right) = F_{\utau} \left( T \right) \times \mathbf{1}_{\left\{ T' = T \right\}}$,
since if $T'$ and $T$ are non-isomorphic then there is no embedding of $T$ in $T'$. 
Note also that $F_{\utau} \left( T \right)$ is the number of automorphisms of $T$, which is positive. 
Consequently we have
\[
 \E \left[ F_{\utau} \left( \UA\left(n_0, S\right) \right) \right] = F_{\utau} \left( T \right) \times \P \left[ \UA \left( n_0, S \right) = T \right].
\]
When $\left| S \right| = \left| T \right|$, we have $\P \left[ \UA \left( n_0, S \right) = T \right] = 0$. 
When $\left| S \right| < \left| T \right|$ it is easy to see that the isomorphism class of $\UA \left( n_0, S \right)$ is nondeterministic 
(here we use the fact that $\left|S\right| \geq 3$), and so $\P \left[ \UA \left( n_0, S \right) = T \right] < 1$. 
In both cases we have that~\eqref{eq:mean_diff} holds. 

\medskip

Now let $\utau \in \cD_+$ be a minimal (for the partial order $\preccurlyeq$ on $\cD_+$) decorated tree for which~\eqref{eq:mean_diff} holds. 
By definition we then have that $\E \left[ F_{\utau'} \left( \UA \left( n_0, S \right) \right) \right] = \E \left[ F_{\utau'} \left( \UA \left( n_0, T \right) \right) \right]$ for every $\utau' \in \cD_+$ such that $\utau' \prec \utau$. 
By the construction of the martingales in Proposition~\ref{prop:mg} we then have that
\[
 \E \left[ M_{\utau}^{\left( S \right)} \left( n_0 \right) \right] \neq \E \left[ M_{\utau}^{\left( T \right)} \left( n_0 \right) \right].
\]
Clearly for any $n \geq n_0$ we have that $\TV \left( \UA \left( n, S \right), \UA \left( n, T \right) \right) \geq \TV \left( M_{\utau}^{\left( S \right)} \left( n \right), M_{\utau}^{\left( T \right)} \left( n \right) \right)$. 
Now let $\left( X, Y \right)$ be a coupling of $\left( M_{\utau}^{\left( S \right)} \left( n \right), M_{\utau}^{\left( T \right)} \left( n \right) \right)$. 
We need to bound from below $\P \left( X \neq Y \right)$ in order to obtain a lower bound on $\TV \left( M_{\utau}^{\left( S \right)} \left( n \right), M_{\utau}^{\left( T \right)} \left( n \right) \right)$. 
Using Paley-Zigmund's inequality we have that
\[
 \P \left( X \neq Y \right) \geq \frac{\left( \E \left[ \left| X - Y \right| \right] \right)^2}{\E \left[ \left( X - Y \right)^2 \right]}.
\]
By Jensen's inequality we have that $\left( \E \left[ \left| X - Y \right| \right] \right)^2 \geq \left( \E \left[ X \right] - \E \left[ Y \right] \right)^2$, and furthermore
\begin{align*}
 \E \left[ \left( X - Y \right)^2 \right] &= \E \left[ \left( X - \E \left[ X \right] + \E \left[ X \right] - \E \left[ Y \right] + \E \left[ Y \right] - Y \right)^2 \right] \\
 &= \Var \left( X \right) + \Var \left( Y \right) + \left( \E \left[ X \right] - \E \left[ Y \right] \right)^2 + 2 \E \left[ \left( X - \E \left[ X \right] \right) \left( \E \left[ Y \right] - Y \right) \right] \\
 &\leq 2 \Var \left( X \right) + 2 \Var \left( Y \right) + \left( \E \left[ X \right] - \E \left[ Y \right] \right)^2.
\end{align*}
Thus we have shown that
\begin{multline*}
 \TV \left( \UA \left( n, S \right), \UA \left( n, T \right) \right) \\
\geq \frac{\left( \E \left[ M_{\utau}^{\left( S \right)} \left( n \right) \right] - \E \left[ M_{\utau}^{\left( T \right)} \left( n \right) \right] \right)^2}{2 \Var \left( M_{\utau}^{\left( S \right)} \left( n \right) \right) + 2 \Var \left( M_{\utau}^{\left( T \right)} \left( n \right) \right) +\left( \E \left[ M_{\utau}^{\left( S \right)} \left( n \right) \right] - \E \left[ M_{\utau}^{\left( T \right)} \left( n \right) \right] \right)^2}.
\end{multline*}
Since $M_{\utau}^{\left( S \right)}$ and $M_{\utau}^{\left( T \right)}$ are martingales, for every $n \geq n_0$ we have that $\E \left[ M_{\utau}^{\left( S \right)} \left( n \right) \right] - \E \left[ M_{\utau}^{\left( T \right)} \left( n \right) \right] = \E \left[ M_{\utau}^{\left( S \right)} \left( n_0 \right) \right] - \E \left[ M_{\utau}^{\left( T \right)} \left( n_0 \right) \right] \neq 0$. Also, since the two martingales are bounded in $L^2$, we have that $\Var \left( M_{\utau}^{\left( S \right)} \left( n \right) \right) + \Var \left( M_{\utau}^{\left( T \right)} \left( n \right) \right)$ is bounded as $n \to \infty$. We conclude that $\delta \left( S, T \right) > 0$. 
\end{proof}

\subsection{Recurrence relation}\label{sec:recurrence}

The following recurrence relation for the conditional expectations of $F_{\utau} \left( \UA \left( n, S \right) \right)$ is key to estimating the moments of $F_{\utau} \left( \UA \left( n, S \right) \right)$. 

\begin{lemma}\label{lem:recurrence}
 Let $\utau \in \cD$ such that $\left| \utau \right| \geq 2$. Then for every seed tree $S$ and for every $n \geq \left| S \right|$ we have 
\begin{multline}\label{eq:rec}
 \E \left[ F_{\utau} \left( \UA \left( n + 1, S \right) \right) \, \middle| \, \cF_n \right] = \left( 1 + \frac{w \left( \utau \right)}{n} \right) F_{\utau} \left( \UA \left( n, S \right) \right) \\
 + \frac{1}{n} \left\{ \sum_{v \in \tau : \ell \left( v \right) \geq 2} \ell \left( v \right) \left( \ell \left( v \right) - 1 \right) F_{\utau_v'} \left( \UA \left( n, S \right) \right) + \sum_{v \in L_{0,1} \left( \utau \right) } F_{\utau_v} \left( \UA \left( n, S \right) \right) \right\}.
\end{multline}
\end{lemma}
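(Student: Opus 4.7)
The plan is to prove the recurrence by a direct combinatorial computation, exploiting the interpretation of $F_\utau(T)$ as counting decorated embeddings of $\utau$ into $T$. Write $T_n := \UA(n, S)$, let $u_{n+1}$ be the vertex added at time $n+1$, and let $w$ be its unique neighbor in $T_n$, chosen uniformly from $V(T_n)$ and independently of $\cF_n$. The idea is to classify every decorated embedding $\uphi$ of $\utau$ into $T_{n+1}$ according to the role of $u_{n+1}$, count each class conditional on $\cF_n$ and on $w$, and then average over $w$.

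\textbf{Step 1 (Partition).} Split the decorated embeddings of $\utau$ into $T_{n+1}$ into three disjoint classes: (A) $u_{n+1}$ is neither in $\phi(\tau)$ nor used as any of the $w(\utau)$ arrows; (B) $u_{n+1} \notin \phi(\tau)$ but $u_{n+1}$ is one of the $\ell(v)$ arrows at some $v \in \tau$; (C) $u_{n+1} \in \phi(\tau)$, in which case $v := \phi^{-1}(u_{n+1})$ must be a leaf of $\tau$ (since $u_{n+1}$ has degree one in $T_{n+1}$), forcing $\phi^{-1}(w) = v^N$, the unique neighbor of $v$ in $\tau$.

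\textbf{Step 2 (Count).} Class (A) bijects with decorated embeddings of $\utau$ into $T_n$: the only effect of attaching $u_{n+1}$ is to enlarge one component of $\wh{T_n}$ by a single vertex, so vertices of $T_n$ sit in the same $\wh{T_{n+1}}$-component as in $\wh{T_n}$, and the $w(\utau)$ arrows can be placed with exactly the same count. Thus (A) contributes $F_\utau(T_n)$ for every $w$. In class (B), $u_{n+1}$ lies in the component of $\phi(v)$ in $\wh{T_{n+1}}$ iff $w$ lies in the component of $\phi(v)$ in $\wh{T_n}$, a conditional event of probability $f_{\phi(v)}(T_n)/n$; given this, there are $\ell(v) \cdot [f_{\phi(v)}(T_n)]_{\ell(v)-1}$ ways to complete the arrows at $v$ (choose a slot for $u_{n+1}$ among $\ell(v)$ positions and fill the remaining slots with an ordered choice of distinct vertices from the enlarged component), while the arrows at each $v' \neq v$ contribute $[f_{\phi(v')}(T_n)]_{\ell(v')}$ as usual. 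In class (C), the component of $u_{n+1}$ in $\wh{T_{n+1}}$ reduces to $\{u_{n+1}\}$ once the $\phi$-edge $\{u_{n+1}, w\}$ is removed, so one cannot place $\ell(v) \geq 2$ distinct arrows there; thus $v \in L_{0,1}(\utau)$, and for $v' \neq v$ the component of $\phi(v')$ in $\wh{T_{n+1}}$ coincides with its component in the forest $\wh{T_n}(T_n, \tau_v, \phi|_{\tau_v})$.

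\textbf{Step 3 (Average and simplify).} Averaging $1/n$ over $w$, class (A) contributes $F_\utau(T_n)$ and class (C) contributes $\frac{1}{n} \sum_{v \in L_{0,1}(\utau)} F_{\utau_v}(T_n)$ (summing over $w$ removes the constraint $\phi(v^N) = w$). The contribution of class (B) is
\[
\sum_\phi \sum_{v \in \tau} \frac{f_{\phi(v)}(T_n)}{n} \, \ell(v) \, [f_{\phi(v)}(T_n)]_{\ell(v)-1} \prod_{v' \neq v} [f_{\phi(v')}(T_n)]_{\ell(v')},
\]
which, via the descending-factorial identity $f \cdot [f]_{\ell-1} = [f]_\ell + (\ell-1)\,[f]_{\ell-1}$ and the relation $\sum_{v \in \tau} \ell(v) = w(\utau)$, collapses to $\frac{w(\utau)}{n} F_\utau(T_n) + \frac{1}{n} \sum_{v : \ell(v) \geq 2} \ell(v)(\ell(v)-1) F_{\utau_v'}(T_n)$. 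Summing the three contributions reproduces~\eqref{eq:rec}.

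The main obstacle is the bookkeeping in class (B): one must track that exactly one component of $\wh{T_n}$ (the one containing $w$) is enlarged by the single vertex $u_{n+1}$, so that this enlargement only affects the arrow count at one distinguished vertex of $\tau$, and then recognize that the resulting factor $f \cdot \ell(v) \cdot [f]_{\ell(v)-1}$ splits cleanly into a ``multiplicative drift'' term of size $w(\utau)/n$ (which is what folds $F_\utau$ into the coefficient $1 + w(\utau)/n$) and a correction of size $\ell(v)(\ell(v)-1)/n$ per vertex $v$ with $\ell(v) \geq 2$ (which produces the $F_{\utau_v'}$ terms). The remaining pieces are largely a careful re-indexing of decorated embeddings.
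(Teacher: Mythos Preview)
Your proof is correct and follows essentially the same approach as the paper's: both split according to whether the new vertex $u_{n+1}$ lies in $\phi(\tau)$, and both reduce the ``old embedding'' case via the identity $f\cdot[f]_{\ell-1}=[f]_\ell+(\ell-1)[f]_{\ell-1}$. The only cosmetic difference is that you partition \emph{decorated} embeddings into three classes (A), (B), (C) according to the role of $u_{n+1}$, whereas the paper partitions bare embeddings into two classes and then expands $[f+1]_\ell=[f]_\ell+\ell[f]_{\ell-1}$ algebraically---your (A)/(B) split is precisely this identity read combinatorially.
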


\begin{proof}
 Fix $\utau \in \cD$ with $\left| \utau \right| \geq 2$, fix a seed tree $S$, and let $n \geq \left| S \right|$. To simplify notation we omit the dependence on $S$ and write $T_n$ instead of $\UA \left( n, S \right)$. When evaluating $\E \left[ F_{\utau} \left( T_{n+1} \right) \, \middle| \, \cF_n \right]$ we work conditionally on $\cF_n$, so we may consider $T_n$ as being fixed. 

 \medskip

 Let $u_{n+1}$ denote the vertex present in $T_{n+1}$ but not in $T_n$, and let $u_n$ denote its neighbor in $T_{n+1}$. 
Let $\cE_{n+1}$ denote the set of all embeddings $\phi : \tau \to T_{n+1}$; we can write $\cE_{n+1}$ as the disjoint union of the set of those using only vertices of $T_n$, denoted by $\cE_n$, and the set of those using the new vertex $u_{n+1}$, denoted by $\cE_{n+1} \setminus \cE_n$. 
To simplify notation, if $\utau \in \cD$, $T$ is a tree, and $\phi : \tau \to T$ is an embedding, write $\cW_{\phi} \left( T \right) = \prod_{v \in \tau} \left[ f_{\phi \left( v \right)} \left( T \right) \right]_{\ell \left( v \right)}$ for the number of decorated embeddings of $\utau$ in $T$ that use the embedding $\phi$. 
We then have $F_{\utau} \left( T_{n+1} \right) = \sum_{\phi \in \cE_n} \cW_{\phi} \left( T_{n+1} \right) + \sum_{\phi \in \cE_{n+1} \setminus \cE_n} \cW_{\phi} \left( T_{n+1} \right)$ and we deal with the two sums separately. 

\medskip 

First let $\phi \in \cE_n$. For $v \in \tau$, denote by $E_v$ the event that $u_n$ is in the same tree of $\wh{T_n}$ as $\phi \left( v \right)$ (recall the definition of $\wh{T_n}$ from Section~\ref{sec:obs}). Clearly $\P \left( E_v \, \middle| \, \cF_n \right) = f_{\phi \left( v \right)} \left( T_n \right) / n$. 
Under the event $E_v$ we have that $f_{\phi\left( v \right)} \left( T_{n+1} \right) = f_{\phi\left( v \right)} \left( T_n \right) + 1$, while for every $v' \in \tau \setminus \left\{ v \right\}$ we have $f_{\phi\left( v' \right)} \left( T_{n+1} \right) = f_{\phi\left( v' \right)} \left( T_n \right)$. 
Now using the identities $\left[ d + 1 \right]_{\ell} = \left[ d \right]_{\ell} + \ell \times \left[ d \right]_{\ell - 1}$ and 
$d \times \left[ d \right]_{\ell - 1} = \left[ d \right]_{\ell} + \left( \ell - 1 \right) \times \left[ d \right]_{\ell - 1}$, 
which hold for every $d, \ell \geq 1$, and also using $\left[d+1 \right]_0 = \left[d\right]_0$, we have that
\begin{multline*}
 \E \left[ \cW_{\phi} \left( T_{n+1} \right) \, \middle| \, \cF_n \right] \\
\begin{aligned}
&= \sum_{v \in \tau} \frac{f_{\phi\left( v \right)} \left( T_n \right)}{n} \left[ f_{\phi \left( v \right)} \left( T_n \right) + 1 \right]_{\ell \left( v \right)} \prod_{v' \in \tau \setminus \left\{ v \right\}} \left[ f_{\phi \left( v' \right)} \left( T_n \right) \right]_{\ell \left( v' \right)} \\
&= \cW_{\phi} \left( T_n \right) + \frac{1}{n} \sum_{v \in \tau : \ell \left( v \right) \geq 1} \ell \left( v \right) f_{\phi \left( v \right)} \left( T_n \right) \left[ f_{\phi\left( v \right)} \left( T_n \right) \right]_{\ell \left( v \right) - 1} \prod_{v' \in \tau \setminus \left\{ v \right\}} \left[ f_{\phi \left( v' \right)} \left( T_n \right) \right]_{\ell \left( v' \right)} \\
&= \left( 1 + \frac{w \left( \utau \right)}{n} \right) \cW_{\phi} \left( T_n \right) + \frac{1}{n} \sum_{v \in \tau : \ell \left( v \right) \geq 2} \ell \left( v \right) \left( \ell \left( v \right) - 1 \right) \left[ f_{\phi\left( v \right)} \left( T_n \right) \right]_{\ell \left( v \right) - 1} \prod_{v' \in \tau \setminus \left\{ v \right\}} \left[ f_{\phi \left( v' \right)} \left( T_n \right) \right]_{\ell \left( v' \right)} \\
&= \left( 1 + \frac{w \left( \utau \right)}{n} \right) \cW_{\phi} \left( T_n \right) + \frac{1}{n} \sum_{v \in \tau : \ell \left( v \right) \geq 2} \ell \left( v \right) \left( \ell \left( v \right) - 1 \right) \cW_{\phi_v'} \left( T_n \right),
\end{aligned}
\end{multline*}
where $\phi_v'$ is the embedding equal to $\phi$ of the decorated tree $\utau_v'$. 
Now as $\phi$ runs through the embeddings of $\utau$ in $T_n$, $\phi_v'$ runs exactly through the embeddings of $\utau_v'$. 
So we have that
\begin{equation}\label{eq:phi_old}
  \sum_{\phi \in \cE_n} \E \left[ \cW_{\phi} \left( T_{n+1} \right) \, \middle| \, \cF_n \right] = \left( 1 + \frac{w \left( \utau \right)}{n} \right) F_{\utau} \left( T_n \right) + \frac{1}{n} \sum_{v \in \tau : \ell \left( v \right) \geq 2} \ell \left( v \right) \left( \ell \left( v \right) - 1  \right) F_{\utau_v'} \left( T_n \right).
\end{equation}

\medskip

Now fix $T_{n+1}$ and consider $\phi \in \cE_{n+1} \setminus \cE_n$. 
Let $w \in \tau$ be such that $\phi \left( w \right) = u_{n+1}$. 
Since $\phi$ is an embedding, we must have $w \in L \left( \utau \right)$. 
Note that if $w \notin L_{0,1} \left( \utau \right)$ then $\cW_{\phi} \left( T_{n+1} \right) = 0$. 
If $w \in L_{0,1} \left( \utau \right)$ then denote by $\cE_w$ the set of all embeddings $\phi \in \cE_{n+1} \setminus \cE_n$ such that $\phi \left( w \right) = u_{n+1}$. 
Now fix $w \in L_{0,1} \left( \utau \right)$ and $\phi \in \cE_w$. 
Note that $\phi$ restricted to $\tau \setminus \left\{ w \right\}$ is an embedding of $\utau_w$ in $T_n$; call this $\phi_w$. 
Let $x$ be the neighbor of $w$ in $\tau$. 
We then must have $\phi \left( x \right) = u_n$, and also $\left[ f_{\phi \left( w \right)} \left( T_{n+1} \right) \right]_{\ell \left( w \right)} = 1$ (irrespective of whether $w \in L_0 \left( \utau \right)$ or $w \in L_1 \left( \utau \right)$).  
Furthermore, for every $w' \in \tau \setminus \left\{ w \right\}$ we have $f_{\phi \left( w' \right)} \left( T_{n+1} \right) = f_{\phi \left( w' \right) } \left( T_n \right)$. 
Thus we have 
\[
 \cW_{\phi} \left( T_{n+1} \right) = \cW_{\phi_w} \left( T_n \right) \mathbf{1}_{\left\{ \phi \left( w \right) = u_{n+1} \right\}}.
\]
For fixed $w \in L_{0,1} \left( \utau \right)$, as $\phi$ runs through $\cE_w$, $\phi_w$ runs through all the embeddings of $\utau_w$ in $T_n$. So summing over $w \in L$ we obtain 
\[
 \sum_{\phi \in \cE_{n+1} \setminus \cE_n} \cW_{\phi} \left( T_{n+1} \right) = \sum_{w \in L_{0,1} \left( \utau \right)} \sum_{\phi_w : \utau_w \to T_n} \cW_{\phi_w} \left( T_n \right) \mathbf{1}_{\left\{ \phi \left( w \right) = u_{n+1} \right\}} =  \sum_{w \in L_{0,1} \left( \utau \right)} F_{\utau_w} \left( T_n \right) \mathbf{1}_{\left\{ \phi \left( w \right) = u_{n+1} \right\}}.
\]
Now taking conditional expectation given $\mathcal{F}_n$, we get that
\begin{equation}\label{eq:phi_new}
 \E \left[ \sum_{\phi \in \cE_{n+1} \setminus \cE_n} \cW_{\phi} \left( T_{n+1} \right) \, \middle| \, \cF_n \right] = \frac{1}{n} \sum_{w \in L_{0,1} \left( \utau \right)} F_{\utau_w} \left( T_n \right).
\end{equation}
Summing~\eqref{eq:phi_old} and~\eqref{eq:phi_new} we obtain~\eqref{eq:rec}.
\end{proof}

\subsection{Moment estimates}\label{sec:moment}

Using the recurrence relation of Lemma~\ref{lem:recurrence} proved in the previous subsection, we now establish moment estimates on the number of decorated embeddings $F_{\utau} \left( \UA \left( n, S \right) \right)$. 
These are then used in the next subsection to show that the martingales of Proposition~\ref{prop:mg} are bounded in $L^2$. 

\medskip

The first moment estimates are a direct corollary of Lemma~\ref{lem:recurrence}.
\begin{corollary}\label{cor:first_moment}
 Let $\utau \in \cD$ be a decorated tree and let $S$ be a seed tree. 
\begin{enumerate}[(a)]
 \item We have that $n^{w\left( \utau\right)} \simless \E \left[ F_{\utau} \left( \UA \left( n, S \right) \right) \right]$.
 \item If $\left| \utau \right| \geq 2$ and $\utau \in \cD \setminus \cD_0^*$, then we have that $\E \left[ F_{\utau} \left( \UA \left( n, S \right) \right) \right] \simless n^{w\left( \utau\right)}$.
 \item If $\left| \utau \right| = 1$ or $\utau \in \cD_0^*$, then we have that $\E \left[ F_{\utau} \left( \UA \left( n, S \right) \right) \right] \simless n^{w\left( \utau\right)+1}$.
\end{enumerate}
\end{corollary}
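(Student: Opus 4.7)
The plan is to establish all three bounds simultaneously by strong induction on the strict partial order $\prec$. The base case $|\utau|=1$ is immediate from the identity $F_{\utau}(T) = |T|\,[|T|]_{\ell(v)}$, which gives $\E[F_{\utau}(\UA(n,S))] = n\,[n]_{\ell(v)}$; this is of order $n^{w(\utau)+1}$ and so satisfies both the upper bound (c) and the lower bound (a). For the inductive step, fix $\utau$ with $|\utau| \geq 2$, set $a_n := \E[F_{\utau}(\UA(n,S))]$, and take expectations in Lemma~\ref{lem:recurrence} to obtain
\[
a_{n+1} = \Bigl(1 + \tfrac{w(\utau)}{n}\Bigr) a_n + \tfrac{1}{n} R_n,
\]
where $R_n$ collects the expectations of the bracket term. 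The operation $\utau \mapsto \utau_v'$ preserves $|\utau|$ and drops $w$ by one, while $\utau \mapsto \utau_v$ drops $|\utau|$ by one; in both cases the resulting tree is strictly $\prec \utau$, so the inductive hypothesis applies to every term in $R_n$. Writing $P_n := \prod_{k=n_0}^{n-1}(1+w(\utau)/k)$, which is of order $n^{w(\utau)}$, the lower bound (a) then follows from $R_n \geq 0$ together with positivity of $a_{n_0}$ for any $n_0$ at which $\tau$ embeds in $\UA(n_0,S)$ with positive probability.

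The upper bounds (b) and (c) rest on a case analysis of how the two operations interact with the dichotomy between $\cD \setminus \cD_0^*$ and $\cD_0^*$. Since $\utau_v'$ changes a label from $\geq 2$ to $\geq 1$, it cannot create a label-zero leaf, so the category is preserved. In contrast, $\utau_v$ removes a leaf of label $0$ or $1$ and may create a new label-zero leaf if the unique neighbor of $v$ had degree two and label zero. Consequently, when $\utau$ falls into case (b), only leaves in $L_1(\utau)$ contribute to the $\utau_v$-sum, and the inductive hypothesis gives $\E[F_{\utau_v'}] \simless n^{w(\utau)-1}$ and $\E[F_{\utau_v}] \simless n^{w(\utau)}$ (the worst case for the latter being when $\utau_v$ has size $1$ or has acquired a label-zero leaf, in which case (c) is applied to a tree of weight $w(\utau)-1$). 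Hence $R_n \simless n^{w(\utau)}$. When $\utau$ falls into case (c) with $|\utau|\ge 2$, the inductive hypothesis gives $\E[F_{\utau_v'}] \simless n^{w(\utau)}$, and the worst leaf-removal terms (those with $v \in L_0(\utau)$, where the weight is unchanged) give $\E[F_{\utau_v}] \simless n^{w(\utau)+1}$. Hence $R_n \simless n^{w(\utau)+1}$.

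To finish, set $b_n := a_n / P_n$; the recurrence telescopes into $b_n - b_{n_0} = \sum_{k=n_0}^{n-1} R_k / ((k + w(\utau)) P_k)$. In case (b) the summand is $\simless 1/k$, giving $b_n \simless \log n$ and thus $a_n \simless n^{w(\utau)}$; in case (c) the summand is $\simless 1$, giving $b_n \simless n$ and thus $a_n \simless n^{w(\utau)+1}$, matching the claimed bounds. The main obstacle is precisely the bookkeeping underlying the case analysis above: one must track when $\utau \mapsto \utau_v$ creates a new label-zero leaf, since this is exactly what forces the extra factor of $n$ that separates (c) from (b).
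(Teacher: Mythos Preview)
Your proposal is correct and follows essentially the same approach as the paper: induction on $\prec$ with the base case $|\utau|=1$, the recurrence from Lemma~\ref{lem:recurrence}, a case split on whether $\utau$ has a label-zero leaf, and then the standard analysis of the resulting one-step recursion via $b_n = a_n/P_n$ (which the paper packages as Lemma~\ref{lem:est}). Your bookkeeping on how the operations $\utau\mapsto\utau_v'$ and $\utau\mapsto\utau_v$ interact with the $\cD\setminus\cD_0^*$ versus $\cD_0^*$ dichotomy is slightly more explicit than the paper's, but the argument is the same.
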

\begin{proof} 
Fix a seed tree $S$ and, as before, write $T_n$ instead of $\UA \left( n, S \right)$ in order to simplify notation. 
First, recall that if $\left| \utau \right| = 1$, then $F_{\utau} \left( T_n \right) = n \times \left[ n \right]_{w \left( \utau \right)}$, so the statements of part (a) and (c) hold in this case. In the following we can therefore assume that $\left| \utau \right| \geq 2$.

\medskip

Lemma~\ref{lem:recurrence} then implies that for every $n \geq \left| S \right|$ we have
\[
 \E \left[ F_{\utau} \left( T_{n+1} \right) \right] \geq \left( 1 + \frac{w\left( \utau \right)}{n} \right) \E \left[ F_{\utau} \left( T_n \right) \right].
\]
Since there exists $n_0$ such that $\E \left[ F_{\utau} \left( T_{n_0} \right) \right] > 0$ (one can take, e.g., $n_0 = \left| S \right| + \left| \utau \right|$), this immediately implies part (a) (see Appendix~\ref{app:est} for further details). 

\medskip

We prove parts (b) and (c) by induction on $\utau$ for the partial order $\preccurlyeq$. We have already checked that the statement holds when $\left| \utau \right| = 1$, so the base case of the induction holds. 
Now fix $\utau$ such that $\left| \utau \right| \geq 2$, and assume that (b) and (c) hold for all $\utau'$ such that $\utau' \prec \utau$. There are two cases to consider: either $\utau \in \cD \setminus \cD_0^*$ or $\utau \in \cD_0^*$. 

\medskip

If $\utau \in \cD \setminus \cD_0^*$ then $L_0 \left( \utau \right) = \emptyset$. Note that for every $v \in L_1 \left( \utau \right)$ we have $w \left( \utau_v \right) = w \left( \utau \right) - 1$, and also for every $v \in \tau$ such that $\ell \left( v \right) \geq 2$ we have $w\left( \utau_v' \right) = w \left( \utau \right) - 1$. 
Therefore by induction for every $v \in L_1 \left( \utau \right)$ we have $\E \left[ F_{\utau_v} \left( T_n \right) \right] \simless n^{w \left( \utau \right)}$ and also for every $v \in \tau$ such that $\ell \left( v \right) \geq 2$ we have $\E \left[ F_{\utau_v'} \left( T_n \right) \right] \simless n^{w \left( \utau \right)}$. 
So by Lemma~\ref{lem:recurrence} there exist constants $C, \gamma > 0$ such that
\begin{equation}\label{eq:ub}
 \E \left[ F_{\utau} \left( T_{n+1} \right) \right] \leq \left( 1 + \frac{w\left( \utau \right)}{n} \right) \E \left[ F_{\utau} \left( T_n \right) \right] + C \left( \log \left( n \right) \right)^{\gamma} n^{w \left( \utau \right) - 1}.
\end{equation}
This then implies that $\E \left[ F_{\utau} \left( T_n \right) \right] \simless n^{w \left( \utau\right)}$; see Appendix~\ref{app:est} for details.

\medskip

If $\utau \in \cD_0^*$ then $L_0 \left( \utau \right) \neq \emptyset$ and note that for every $v \in L_0 \left( \utau \right)$ we have $w \left( \utau_v \right) = w \left( \utau \right)$. 
If for every $v \in L_0 \left( \utau \right)$ we have $\utau_v \in \cD \setminus \cD_0^*$, then the same argument as in the previous paragraph goes through, and we have that $\E \left[ F_{\utau} \left( T_n \right) \right] \simless n^{w \left( \utau\right)}$. 
However, if there exists $v \in L_0 \left( \utau \right)$ such that $\utau_v \in \cD_0^*$, then~\eqref{eq:ub} does not hold in this case; instead we have from Lemma~\ref{lem:recurrence} that there exist constants $C, \gamma > 0$ such that 
\[
 \E \left[ F_{\utau} \left( T_{n+1} \right) \right] \leq \left( 1 + \frac{w\left( \utau \right)}{n} \right) \E \left[ F_{\utau} \left( T_n \right) \right] + C \left( \log \left( n \right) \right)^{\gamma} n^{w \left( \utau \right)}.
\]
Similarly as before, this then implies that $\E \left[ F_{\utau} \left( T_n \right) \right] \simless n^{w \left( \utau\right)+1}$; see Appendix~\ref{app:est} for details.
\end{proof}

The second moment estimates require additional work. 
First, recall again that if $\left| \utau \right| = 1$, then $F_{\utau} \left( \UA \left( n, S \right) \right) = n \times \left[ n \right]_{w \left( \utau \right)}$. 
Consequently, we have that $F_{\utau} \left( \UA \left(n, S \right) \right)^2 \simless n^{2 w \left( \utau \right) + 2}$ and also that $\left( F_{\utau} \left( \UA \left( n + 1, S \right) \right) - F_{\utau} \left( \UA \left( n, S \right) \right) \right)^2 \simless n^{2 w \left( \utau \right)}$.

\begin{lemma}\label{lem:second_moment}
 Let $\utau \in \cD_+$ with $\left| \utau \right| \geq 2$ and let $S$ be a seed tree. 
\begin{enumerate}[(a)]
 \item We have that $\E \left[ F_{\utau} \left( \UA \left(n, S \right) \right)^2 \right] \simless n^{2 w \left( \utau \right)}$.
 \item We have that $\E \left[ \left( F_{\utau} \left( \UA \left( n + 1, S \right) \right) - F_{\utau} \left( \UA \left( n, S \right) \right) \right)^2 \right] \simless n^{2 w \left( \utau \right) - 2}$.
\end{enumerate}
\end{lemma}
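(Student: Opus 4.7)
The plan is to prove parts (a) and (b) jointly by strong induction on $\utau$ with respect to the partial order $\preccurlyeq$ on $\cD_+$, proving (b) first at each step since the bound on $\E[\Delta_n^2]$ feeds into the recurrence used in (a). The case $|\utau|=1$, although excluded from the statement, provides the deterministic identity $F_\utau(T_n)=n[n]_{w(\utau)}$ (noted just before the lemma) that I would invoke whenever inductive reductions produce a singleton. The base case $|\utau|=2$ can be checked by direct computation from~\eqref{betabinomial}, in the spirit of the variance calculations in Section~\ref{sec:ex}.

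For part (a), setting $X_n := F_\utau(T_n)$ and $\Delta_n:=X_{n+1}-X_n$, expanding $X_{n+1}^2$ and applying Lemma~\ref{lem:recurrence} gives
\[
\E[X_{n+1}^2] \,=\, \left(1+\frac{2w(\utau)}{n}\right)\E[X_n^2] + \frac{2}{n}\E[X_n R_n] + \E[\Delta_n^2],
\]
where $R_n$ is the $\cF_n$-measurable combination of $F_{\utau'}(T_n)$ with $\utau' \prec \utau$ appearing on the right-hand side of~\eqref{eq:rec}. Cauchy-Schwarz together with the inductive hypothesis of (a) applied to each such $\utau'$ (combined, in the boundary case $|\utau'|=1$, with the deterministic formula and Corollary~\ref{cor:first_moment}(c)) yields $\frac{2}{n}\E[X_n R_n] \simless n^{2w(\utau)-1}$. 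Using the bound $\E[\Delta_n^2]\simless n^{2w(\utau)-2}$ from (b), the recurrence becomes $\E[X_{n+1}^2] \leq \left(1+\frac{2w(\utau)}{n}\right)\E[X_n^2] + C\,n^{2w(\utau)-1}\polylog(n)$, and a Gr\"onwall-type argument applied to $b_n := \E[X_n^2]/n^{2w(\utau)}$ (analogous to those in Appendix~\ref{app:est}) closes the induction.

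For part (b), I would condition on $T_n$ and use that $u_n$ is uniform on $V(T_n)$, so
\[
\E[\Delta_n^2\mid\cF_n] \,=\, \frac{1}{n}\sum_{u \in V(T_n)}\Delta_n(u)^2,
\]
where $\Delta_n(u)$ denotes the deterministic value of $\Delta_n$ when $u_n=u$. Following the proof of Lemma~\ref{lem:recurrence} I decompose $\Delta_n(u)=A_n(u)+B_n(u)$, where $A_n(u)$ collects contributions from pre-existing embeddings $\phi$ (each contributing $\ell(v_0)[f_{\phi(v_0)}(T_n)]_{\ell(v_0)-1}\prod_{v\neq v_0}[f_{\phi(v)}(T_n)]_{\ell(v)}$ for the unique $v_0 \in \tau$ with $u \in C_{\phi(v_0)}$), and $B_n(u)$ counts new embeddings placing $u_{n+1}$ at a label-$1$ leaf adjacent to $u$. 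Using the identity $f\cdot[f]_{\ell-1}=[f]_\ell+(\ell-1)[f]_{\ell-1}$ to rewrite each summand, together with the bound $|C_{\phi_1(v_1)}\cap C_{\phi_2(v_2)}|\leq \min(f_{\phi_1(v_1)},f_{\phi_2(v_2)})$ on the cross terms arising upon squaring, the sum $\sum_u \Delta_n(u)^2$ reduces to an expression that---after taking expectation---is controlled by first-moment bounds on $F_{\utau^\ast}(T_n)$ for auxiliary decorated trees $\utau^\ast$ (some in $\cD_+$, others in $\cD_0^*$ in degenerate configurations where $f_{\phi(v_0)}$ is close to $\ell(v_0)$). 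Corollary~\ref{cor:first_moment} then delivers the desired $n^{2w(\utau)-2}$ bound.

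The main obstacle is precisely this bound on $\E[\Delta_n^2]$ in (b). The increment $\Delta_n$ is a global functional of $T_n$, and a naive bound via $\max_u\Delta_n(u)$ would be off by roughly a factor of $n$; extracting the correct rate requires the combinatorial expansion sketched above and an honest accounting of when auxiliary decorated trees fall into $\cD_0^*$---where Corollary~\ref{cor:first_moment}(c) introduces an extra factor of $n$ in the first-moment bound that must be absorbed by the explicit ratio $\ell(v_0)/(f_{\phi(v_0)}-\ell(v_0)+1)$ emerging from the product identity above.
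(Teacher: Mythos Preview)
Your proposed route for part~(b) has a genuine gap. After squaring $\Delta_n(u)=A_n(u)+B_n(u)$ and summing over $u$, the cross terms in $\sum_u A_n(u)^2$ are indexed by pairs $(\phi_1,\phi_2)$ of embeddings, and the bound $|C_{\phi_1(v_1)}\cap C_{\phi_2(v_2)}|\le\min(f_{\phi_1(v_1)},f_{\phi_2(v_2)})$ only allows you to restore \emph{one} of the two lowered exponents. The resulting upper bound is therefore a \emph{product} of the form $F_{\utau}(T_n)\cdot F_{\utau'}(T_n)$ (or a finite sum of such products) rather than a single $F_{\utau^\ast}(T_n)$. First-moment bounds do not control $\E[F_{\utau}F_{\utau'}]$; Cauchy--Schwarz reintroduces $\E[F_{\utau}^2]$, which is exactly the quantity you are trying to bound, so the induction does not close. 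The same circularity afflicts your recurrence for part~(a): bounding $\tfrac{2}{n}\E[X_nR_n]$ via Cauchy--Schwarz again requires $\E[X_n^2]$.

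The paper avoids this entirely by a direct combinatorial argument: it interprets $F_{\utau}(T_n)^2$ (and similarly the squared increment) as counting pairs of decorated embeddings, then \emph{merges} each pair into a single decorated embedding of a larger tree $\usigma$. When $\phi_1(\tau)\cap\phi_2(\tau)\ne\emptyset$ the union is itself a tree; when $\phi_1(\tau)\cap\phi_2(\tau)=\emptyset$ one must add the connecting path in $T_n$, and here the diameter bound $\diam(T_n)\le K\log n$ w.h.p.\ (Lemma~\ref{lem:diam}) is essential to keep the family of possible $\usigma$ polylogarithmic in size---a step your sketch omits. The crucial structural fact is that every leaf of $\usigma$ inherits a positive label (since every leaf of $\utau$ does and the connecting path only adds internal vertices), so $\usigma\in\cD\setminus\cD_0^*$ and Corollary~\ref{cor:first_moment}(b) gives $\E[F_{\usigma}(T_n)]\simless n^{w(\usigma)}\le n^{2w(\utau)}$. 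This yields~(a) directly from first-moment estimates, with no induction on second moments. Part~(b) is handled by the same merging idea, with an additional case analysis (``type~A'' vs.\ ``type~B'') tracking how $u_{n+1}$ is used; the gain of two powers of $n$ comes from $w(\usigma)\le 2w(\utau)-1$ (since $\uphi_1$ and $\uphi_2$ share the arrow at $u_{n+1}$) together with the $1/n$ from the uniformity of $u_n$.
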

We note that part (a) of the lemma follows in a short and simple way once part (b) is proven. 
However, for expository reasons, we first prove part (a) directly, and then prove part (b), whose proof is similar to, and builds upon, the proof of part (a). 

\medskip 

\begin{proof}
 Fix $\utau \in \cD_+$ with $\left| \utau \right| \geq 2$ and a seed tree $S$. 
Define $K \equiv K \left( \utau \right) := \max\left\{ 4 \left( \left| \utau \right| + w \left( \utau \right) \right), 20 \right\}$. 
We always have $F_{\utau} \left( \UA \left(n, S \right) \right) \leq n^{K/4}$, 
since the number of embeddings of $\tau$ in $\UA \left( n, S \right)$ is at most $n^{\left| \utau \right|}$, 
and the product of the subtree sizes raised to appropriate powers is at most $n^{w\left( \utau \right)}$. 
By Lemma~\ref{lem:diam} in Appendix~\ref{app:diam} there exists a constant $C\left( S \right)$ such that $\P \left( \diam \left( \UA \left( n, S \right) \right) > K \log \left( n \right) \right) \leq C\left( S \right) n^{-K/2}$. 
Therefore we have
\[
 \E \left[ F_{\utau} \left( \UA \left(n, S \right) \right)^2 \mathbf{1}_{\left\{ \diam \left( \UA \left( n, S \right) \right) > K \log \left( n \right)  \right\}} \right] \leq C\left( S \right),
\]
and similarly
\[
 \E \left[ \left( F_{\utau} \left( \UA \left( n + 1, S \right) \right) - F_{\utau} \left( \UA \left( n, S \right) \right) \right)^2 \mathbf{1}_{\left\{ \diam \left( \UA \left( n, S \right) \right) > K \log \left( n \right)  \right\}} \right] \simless 1.
\]
Therefore in the remainder of the proof we may, roughly speaking, assume that $\diam \left( \UA \left( n, S \right) \right) \leq K \log \left( n \right)$; 
this will be made precise later.

\medskip

To simplify notation, write simply $T_n$ instead of $\UA \left( n, S \right)$. 
Our proof is combinatorial and uses the notion of decorated embeddings as described in Section~\ref{sec:obs}. 
We start with the proof of (a) which is simpler. 
We say that $\uphi = \uphi_1 \times \uphi_2$ is a \emph{decorated map} if it is a map such that both $\uphi_1$ and $\uphi_2$ are decorated embeddings from $\utau$ to $T_n$. 
Note that $\uphi$ is not necessarily injective. 
If $\uphi$ is a decorated embedding or a decorated map, we denote by $\phi$ the map of the tree without the choices of vertices associated with the arrows. 

\medskip

Now observe that $F_{\utau} \left( T_n \right)^2$ is exactly the number of decorated maps $\uphi = \uphi_1 \times \uphi_2$. 
We partition the set of such decorated maps into two parts: 
let $\cE_{\utau}^1 \left( T_n \right)$ denote the set of all such decorated maps where $\phi_1 \left( \tau \right) \cap \phi_2 \left( \tau \right) \neq \emptyset$, and
let $\cE_{\utau}^2 \left( T_n \right)$ denote the set of all such decorated maps where $\phi_1 \left( \tau \right) \cap \phi_2 \left( \tau \right) = \emptyset$. 
Clearly $F_{\utau} \left( T_n \right)^2 = \left| \cE_{\utau}^1 \left( T_n \right) \right| + \left| \cE_{\utau}^2 \left( T_n \right) \right|$. 
This partition is not necessary for the proof, but it is helpful for exposition. 

\medskip

We first estimate $\left| \cE_{\utau}^1 \left( T_n \right) \right|$. 
To do this, we associate to each decorated map $\uphi \in \cE_{\utau}^1 \left( T_n \right)$ a decorated tree $\usigma$ and a decorated embedding $\upsi$ of it in $T_n$, in the following way; 
see also Figure~\ref{fig:dec_map} for an illustration. 
We take simply the union of the images of the decorated embeddings $\uphi_1$ and $\uphi_2$, and if these share any vertices, edges, or arrows, then we identify them (i.e., we take only a single copy). 
The resulting union is the image of a decorated tree $\usigma$ under a decorated embedding $\upsi$; 
note that $\usigma$ is uniquely defined, and $\upsi$ is uniquely defined up to the ordering of the arrows associated with $\usigma$. 
To define $\upsi$ uniquely, we arbitrarily define the ordering of the arrows associated with $\usigma$ to be the concatenation of the orderings associated with $\uphi_1$ and $\uphi_2$. 
Here we used the fact that $\uphi \in \cE_{\utau}^1 \left( T_n \right)$, since when $\phi_1 \left( \tau \right) \cap \phi_2 \left( \tau \right) = \emptyset$, the union of the two decorated embeddings cannot be the image of a single decorated tree under a decorated embedding.

\begin{figure}[h!]
  \centering
    \includegraphics[width=\textwidth]{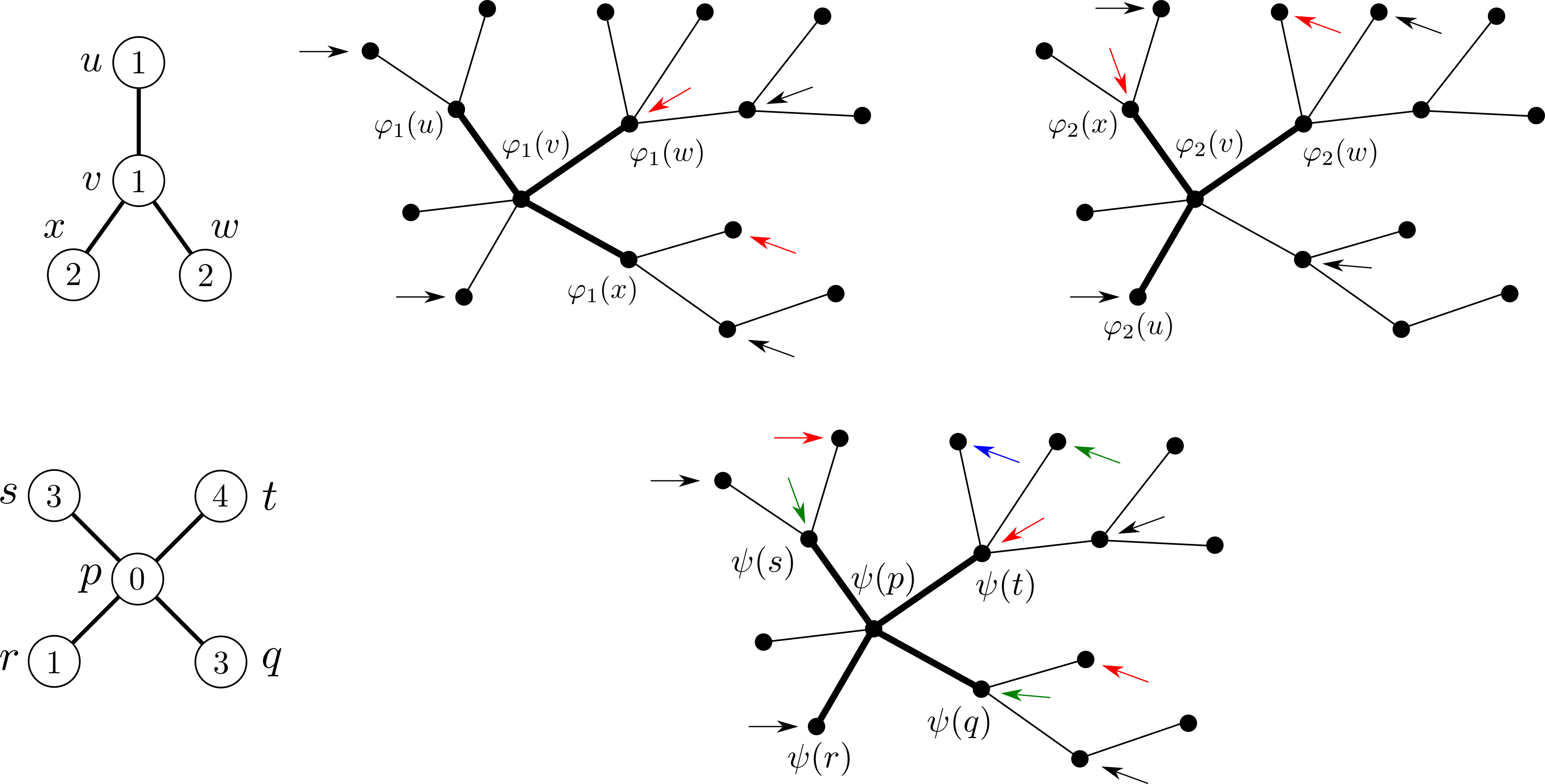}
  \caption{\textbf{A decorated map and an associated decorated embedding.} The top row depicts a decorated tree $\utau$ and two decorated embeddings, $\uphi_1$ and $\uphi_2$, of it into a larger tree $T$. The bottom row depicts the associated decorated tree $\usigma$, and the decorated embedding $\upsi$ of it into $T$.}
  \label{fig:dec_map}
\end{figure}


Note that 
when taking the union of the decorated embeddings we do not introduce any new arrows, so we must have $w\left( \usigma \right) \leq 2 w\left( \utau \right)$. 
Note also that, due to the nonlocality of the decorations, $\usigma$ might have vertices having a label being zero, see, e.g., Figure~\ref{fig:dec_map}. 
However, importantly, the construction implies that all leaves of $\usigma$ have positive labels, i.e., $\usigma \in \cD \setminus \cD_0^*$. 

\medskip

Let $\cU \left( \utau \right)$ denote the set of all decorated trees $\usigma$ that can be obtained in this way. 
The cardinality of $\cU \left( \utau \right)$ is bounded above by a constant depending only on $\utau$, as we now argue. 
The number of ways that two copies of $\tau$ can be overlapped clearly depends only on $\tau$. 
Once the union $\sigma$ of the two copies of $\tau$ is fixed, only the arrows need to be associated with vertices of $\sigma$. 
There are at most $2w \left( \utau \right)$ arrows, and $\sigma$ has at most $2 \left| \utau \right|$ vertices, so there are at most $\left( 2 \left| \utau \right| \right)^{2 w \left( \utau \right)}$ ways to associate arrows to vertices. 

\medskip 

The function $\uphi \mapsto \left( \usigma, \upsi \right)$ is not necessarily one-to-one. 
However, there exists a constant $c \left( \utau \right)$ depending only on $\utau$ such that any pair $\left( \usigma, \upsi \right)$ is associated with at most $c \left( \utau \right)$ decorated maps $\uphi$. 
To see this, note that given $\left( \usigma, \upsi \right)$, in order to recover $\uphi$, it is sufficient to know the following:
(i) for every edge of $\psi \left( \sigma \right)$, whether it is a part of $\phi_1 \left( \tau \right)$, a part of $\phi_2 \left( \tau \right)$, or a part of both, 
(ii) for every arrow of $\upsi \left( \usigma \right)$, whether it is a part of $\uphi_1 \left( \utau \right)$, a part of $\uphi_2 \left( \utau \right)$, or a part of both, and
(iii) the ordering of the arrows for $\uphi_1$ and $\uphi_2$. 
Since $\left| \usigma \right| \leq 2 \left| \utau \right|$ and $w\left( \usigma \right) \leq 2 w \left( \utau \right)$, we can take $c \left( \utau \right) = 3^{2 \left| \utau \right| + 2 w \left( \utau \right)} \left( w\left( \utau \right)! \right)^2$. 

\medskip

We have thus shown that 
\[
 \left| \cE_{\utau}^1 \left( T_n \right) \right| \leq c \left( \utau \right) \sum_{\usigma \in \cU \left( \utau \right)} F_{\usigma} \left( T_n \right).
\]
For every $\usigma \in \cU \left( \utau \right)$ we have that $\usigma \in \cD \setminus \cD_0^*$, $\left| \usigma \right| \geq \left| \utau \right| \geq 2$, and $w \left( \usigma \right) \leq 2 w \left( \utau \right)$, and so by Corollary~\ref{cor:first_moment} we have that $\E \left[ F_{\usigma} \left( T_n \right) \right] \simless n^{2 w \left( \utau \right)}$. 
Since the cardinality of $\cU \left( \utau \right)$ depends only on $\utau$, this implies that $\E \left[ \left| \cE_{\utau}^1 \left( T_n \right) \right| \right] \simless n^{2 w \left( \utau \right)}$. 

\medskip

Now we turn to estimating $\left| \cE_{\utau}^2 \left( T_n \right) \right|$. 
Again, we associate to each decorated map $\uphi \in \cE_{\utau}^2 \left( T_n \right)$ a decorated tree $\usigma$ and a decorated embedding $\upsi$ of it in $T_n$. 
This is done by first, just as before, taking the union of the images of the decorated embeddings $\uphi_1$ and $\uphi_2$, and if these share any arrows, identifying them (i.e., we take only a single copy). 
(Note that, since $\phi_1 \left( \tau \right) \cap \phi_2 \left( \tau \right) = \emptyset$, the decorated embeddings do not share any vertices or edges; however, due to the nonlocality of decorations they might share arrows.) 
We then take the union of this with the unique path in $T_n$ that connects $\phi_1 \left( \tau \right)$ and $\phi_2 \left( \tau \right)$. 
The result of this is a tree in $T_n$, together with a set of at most $2 w\left( \utau \right)$ arrows associated with vertices of $T_n$; 
this is thus the image of a decorated tree $\usigma$ under a decorated embedding $\upsi$, and this is how we define $\left( \usigma, \upsi \right)$. 
See Figure~\ref{fig:dec_map2} for an illustration. 

\begin{figure}[h!]
  \centering
    \includegraphics[width=\textwidth]{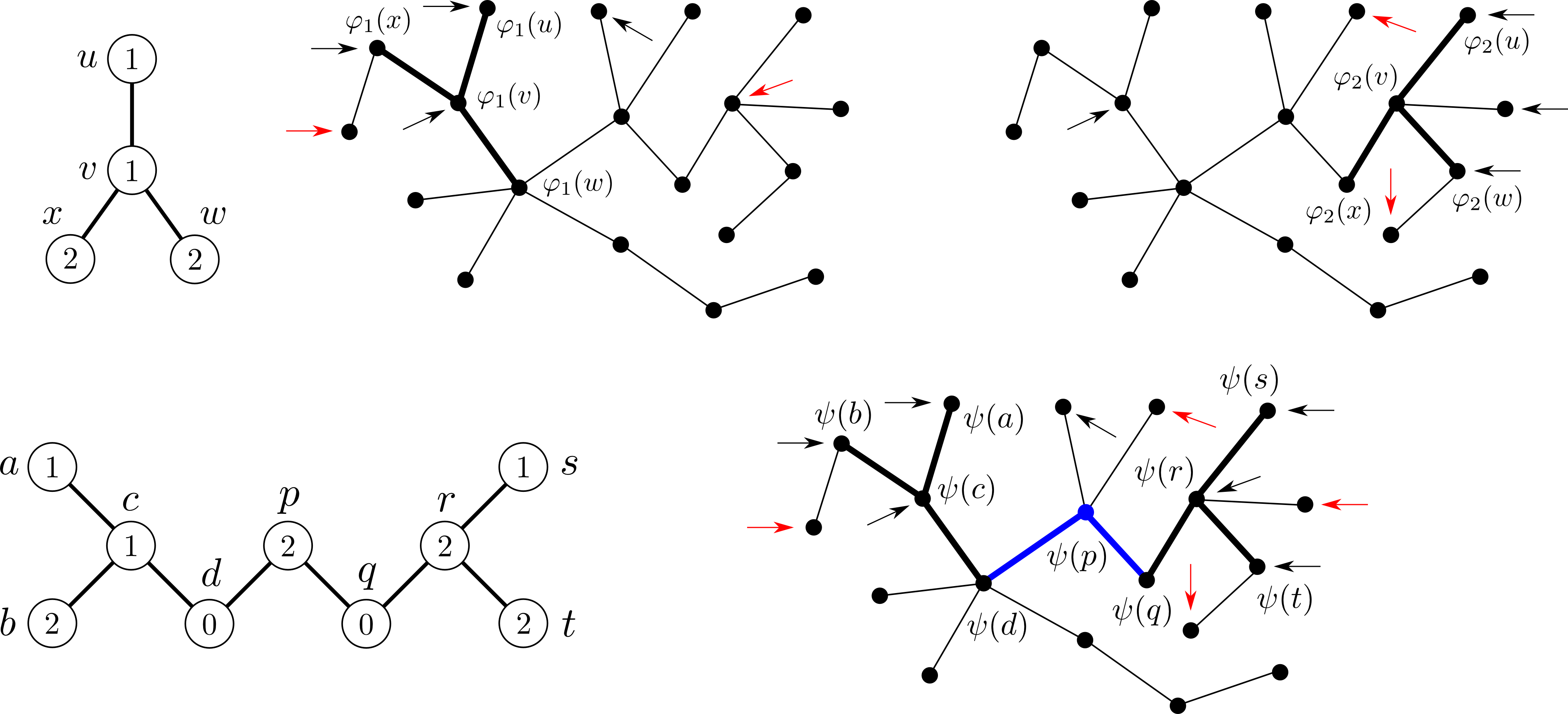}
  \caption{\textbf{Another decorated map and an associated decorated embedding.} The top row depicts a decorated tree $\utau$ and two decorated embeddings, $\uphi_1$ and $\uphi_2$, of it into a larger tree $T$, where now $\phi_1 \left( \tau \right) \cap \phi_2 \left( \tau \right) = \emptyset$. 
The bottom row depicts the associated decorated tree $\usigma$, and the decorated embedding $\upsi$ of it into $T$. The path connecting $\phi_1 \left( \tau \right)$ and $\phi_2 \left( \tau \right)$ is depicted in blue.}
  \label{fig:dec_map2}
\end{figure}

\medskip

Again we have that any such $\usigma$ must satisfy $w \left( \usigma \right) \leq 2 w \left( \utau \right)$ and $\usigma \in \cD \setminus \cD_0^*$. 
The important difference now is that a priori we have no bound on $\left| \usigma \right|$. 
This is where we use that $\diam \left( T_n \right) \leq K \log \left( n \right)$ with high probability.

\medskip

Let $\cU_2^{\left( n \right)} \left( \utau \right)$ denote the set of all decorated trees $\usigma$ of diameter at most $K \log \left( n \right)$ that can be obtained in this way. 
The cardinality of $\cU_2^{\left( n \right)} \left( \utau \right)$ cannot be bounded above by a constant depending only on $\utau$, but
it is at most polylogarithmic in $n$, as we now argue. 
There are at most $\left| \utau \right|^2$ ways to choose which vertices of $\phi_1 \left( \tau \right)$ and $\phi_2 \left( \tau \right)$ are closest to each other, and the path connecting them has length at most $K \log \left( n \right)$. So the number of trees $\sigma$ that can be obtained is at most $\left| \utau \right|^2 K \log \left( n \right)$. 
Once the tree $\sigma$ is fixed, only the arrows need to be associated with vertices of $\sigma$. 
There are at most $2 w \left( \utau \right)$ arrows, and $\sigma$ has at most $K \log \left( n \right) + 2 \left| \utau \right|$ vertices, which shows that
\[
 \left| \cU_2^{\left( n \right)} \left( \utau \right) \right| \leq \left| \utau \right|^2 K \log \left( n \right) \left( K \log \left( n \right) + 2 \left| \utau \right| \right)^{2w\left( \utau \right)} \simless 1.
\]

\medskip

The function $\uphi \mapsto \left( \usigma, \upsi \right)$ is not one-to-one. 
However, there exists a constant $c_2\left( \utau \right)$ depending only on $\utau$ such that 
any pair $\left( \usigma, \upsi \right)$ is associated with at most $c_2 \left( \utau \right)$ decorated maps $\uphi$, as we now show. 
First, given $\left( \usigma, \upsi \right)$, we know that $\phi_1 \left( \tau \right)$ and $\phi_2 \left( \tau \right)$ are at the two ``ends'' of $\psi \left( \sigma \right)$. 
The two ``ends'' of $\psi \left( \sigma \right)$ are well-defined: an edge $e$ of $\psi \left( \sigma \right)$ is part of the path connecting $\phi_1 \left( \tau \right)$ and $\phi_2 \left( \tau \right)$ (and hence not part of an ``end'') if and only if there are at least $\left| \utau \right|$ vertices on both sides of the cut defined by $e$. 
In order to recover $\uphi$, we also need to know for each arrow of $\upsi \left( \usigma \right)$, whether it is a part of $\uphi_1 \left( \utau \right)$, a part of $\uphi_2 \left( \utau \right)$, or a part of both. 
Finally, we need to know the ordering of the arrows for $\uphi_1$ and $\uphi_2$. 
Since $w \left( \usigma \right) \leq 2 w \left( \utau \right)$, we can thus take $c_2 \left( \utau \right) = 2 \times 3^{2 w\left( \utau \right)} \left( w \left( \utau \right)! \right)^2$. 

\medskip

We have thus shown that 
\[
 \left| \cE_{\utau}^2 \left( T_n \right) \right| \mathbf{1}_{\left\{ \diam \left( T_n \right) \leq K \log \left( n \right) \right\}}  \leq c_2 \left( \utau \right) \sum_{\usigma \in \cU_2^{\left( n \right)} \left( \utau \right)} F_{\usigma} \left( T_n \right).
\]
For every $\usigma \in \cU_2^{\left( n \right)} \left( \utau \right)$ we have that $\usigma \in \cD \setminus \cD_0^*$, $\left| \usigma \right| \geq \left| \utau \right| \geq 2$, and $w \left( \usigma \right) \leq 2 w \left( \utau \right)$, and so by Corollary~\ref{cor:first_moment} we have that $\E \left[ F_{\usigma} \left( T_n \right) \right] \simless n^{2 w \left( \utau \right)}$. 
Since we have $\left| \cU_2^{\left( n \right)} \left( \utau \right) \right| \simless 1$, we thus have
\[
 \E \left[ \left| \cE_{\utau}^2 \left( T_n \right) \right| \mathbf{1}_{\left\{ \diam \left( T_n \right) \leq K \log \left( n \right) \right\}} \right] \simless n^{2 w\left( \utau \right)}.
\]
This concludes the proof of (a). 

\medskip

For the proof of (b) we work conditionally on $\cF_n$. 
As in the proof of Lemma~\ref{lem:recurrence}, let $u_{n+1}$ denote the vertex present in $T_{n+1}$ but not in $T_n$, and let $u_n$ denote its neighbor in $T_{n+1}$. 
Observe that $F_{\utau} \left( T_{n+1} \right) - F_{\utau} \left(  T_n \right)$ is equal to the number of decorated embeddings of $\utau$ in $T_{n+1}$ that use the new vertex $u_{n+1}$. 
There are two ways that this may happen, and we call such decorated embeddings ``type A'' and ``type B'' accordingly (see Figure~\ref{fig:dec_mapAB} for an illustration): 
\begin{itemize}
 \item \textbf{Type A.} The decorated embedding maps a vertex $v \in \tau$ to $u_{n+1}$. Since $\utau \in \cD_+$ and the arrows are mapped to different vertices, we must then have $\ell \left( v \right) = 1$, and the arrow pointing to $v$ in $\utau$ must be mapped to $u_{n+1}$. 
 \item \textbf{Type B.} The decorated embedding maps $\tau$ in $T_n$, but there exists an arrow of $\utau$ which it maps to $u_{n+1}$. 
\end{itemize}
\begin{figure}[h!]
  \centering
    \includegraphics[width=\textwidth]{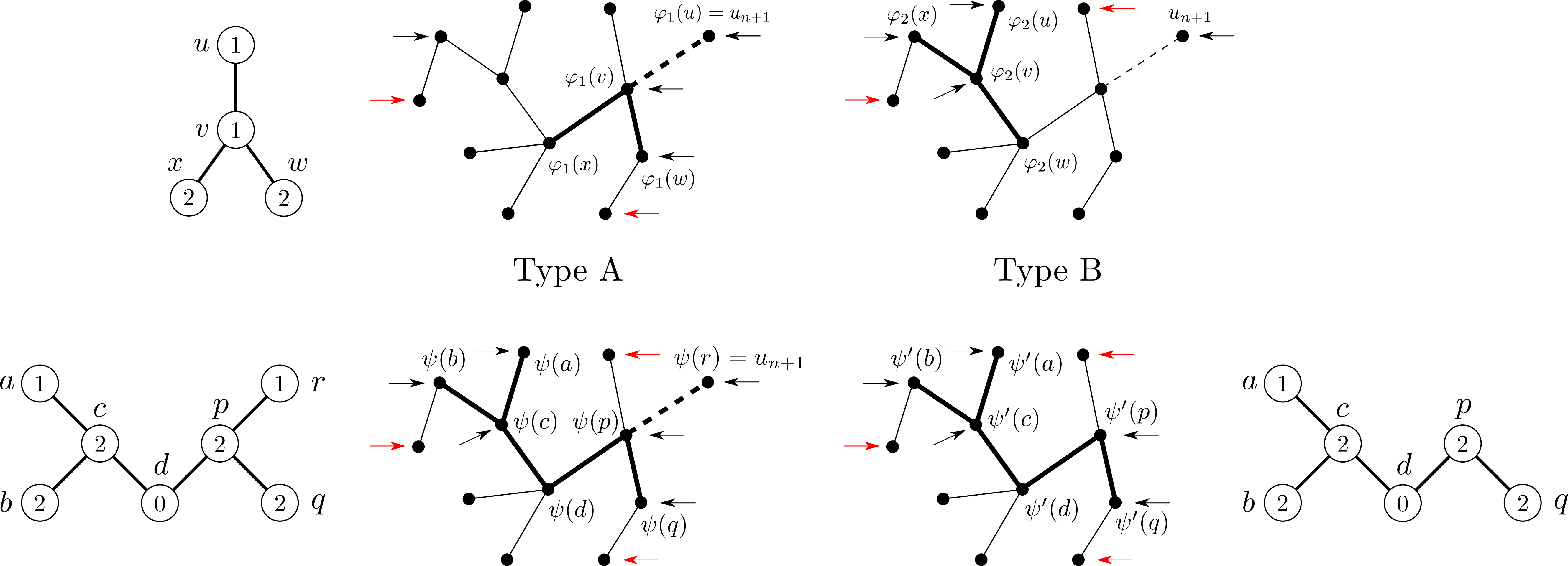}
  \caption{\textbf{Type A and type B decorated embeddings.} The top row depicts a decorated tree $\utau$ and two decorated embeddings, $\uphi_1$ and $\uphi_2$, of it into a larger tree $T_{n+1}$. Here $\uphi_1$ is of type A, and $\uphi_2$ is of type B. In the bottom left is the associated decorated tree $\usigma$, together with the decorated embedding $\upsi$ of it into $T_{n+1}$. In the bottom right is the pair $\left( \usigma', \upsi' \right)$.}
  \label{fig:dec_mapAB}
\end{figure}
Consequently $\left( F_{\utau} \left( T_{n+1} \right) - F_{\utau} \left(  T_n \right) \right)^2$ is equal to the number of decorated maps 
$\uphi = \uphi_1 \times \uphi_2$ such that 
$\uphi_1$ is either of type A or of type B, and the same holds for $\uphi_2$. 
We denote by $\tilde{\cE}_{\utau}^1 \left( T_{n+1} \right)$ the set of all such decorated maps where $\phi_1 \left( \tau \right) \cap \phi_2 \left( \tau \right) \neq \emptyset$, 
and let $\tilde{\cE}_{\utau}^2 \left( T_{n+1} \right)$ denote the set of all such decorated maps where $\phi_1 \left( \tau \right) \cap \phi_2 \left( \tau \right) = \emptyset$. 
Thus we have $\left( F_{\utau} \left( T_{n+1} \right) - F_{\utau} \left(  T_n \right) \right)^2 = \left| \tilde{\cE}_{\utau}^1 \left( T_{n+1} \right) \right| + \left| \tilde{\cE}_{\utau}^2 \left( T_{n+1} \right) \right|$. 
Again, this partition is not necessary for the proof, but it helps the exposition. 

\medskip

We first estimate $\left| \tilde{\cE}_{\utau}^1 \left( T_{n+1} \right) \right|$. 
In the same way as in part (a), we associate to each decorated map $\uphi \in \tilde{\cE}_{\utau}^1 \left( T_{n+1} \right)$ a pair $\left( \usigma, \upsi \right)$. 
Note that both $\uphi_1$ and $\uphi_2$ map an arrow to $u_{n+1}$, so $w \left( \usigma \right) \leq 2 w \left( \utau \right) - 1$, 
and also there exists an arrow $\ua^* \in \usigma$ that is mapped to $u_{n+1}$, denoted by $\upsi \left( \ua^* \right) = u_{n+1}$. 
We again have $\usigma \in \cD \setminus \cD_0^*$. 
As before, the set $\tilde{\cU} \left( \utau \right)$ of all decorated trees $\usigma$ that can be obtained in this way has cardinality bounded above by a constant depending only on $\utau$. 
Furthermore, there exists a constant $\tilde{c}\left( \utau \right)$ depending only on $\utau$ such that any pair $\left( \usigma, \upsi \right)$ is associated with at most $\tilde{c}\left( \utau \right)$ decorated maps $\uphi$. 

\medskip

We partition $\tilde{\cE}_{\utau}^1 \left( T_{n+1} \right)$ further into two parts. 
Let $\tilde{\cE}_{\utau}^{1,A} \left( T_{n+1} \right)$ denote the set of decorated maps $\uphi \in \tilde{\cE}_{\utau}^1 \left( T_{n+1} \right)$ such that at least one of $\uphi_1$ and $\uphi_2$ is of type $A$, 
and let $\tilde{\cE}_{\utau}^{1,B} \left( T_{n+1} \right) := \tilde{\cE}_{\utau}^1 \left( T_{n+1} \right) \setminus \tilde{\cE}_{\utau}^{1,A} \left( T_{n+1} \right)$. 
That is, $\tilde{\cE}_{\utau}^{1,B} \left( T_{n+1} \right)$ consists of those decorated maps $\uphi \in \tilde{\cE}_{\utau}^1 \left( T_{n+1} \right)$ such that both $\uphi_1$ and $\uphi_2$ is of type $B$. 

\medskip

We first estimate $\left| \tilde{\cE}_{\utau}^{1,A} \left( T_{n+1} \right) \right|$. 
We associate to each $\uphi \in \tilde{\cE}_{\utau}^{1,A} \left( T_{n+1} \right)$ a pair $\left( \usigma, \upsi \right)$ as above. 
Let $v \in \sigma$ denote the vertex such that $\psi \left( v \right) = u_{n+1}$, and 
let $v' \in \sigma$ denote the vertex such that $\psi \left( v' \right) = u_n$ 
(these vertices exist because $\uphi \in \tilde{\cE}_{\utau}^{1,A} \left( T_{n+1} \right)$). 
Define the decorated tree $\usigma'$ from $\usigma$ by removing the vertex $v$ from $\usigma$, as well as the arrow $\ua^*$ pointing to it. 
Define also the decorated embedding $\upsi' : \usigma' \to T_n$ to be equal to $\upsi$ on $\usigma'$, i.e., $\upsi' = \upsi|_{\usigma'}$; see Figure~\ref{fig:dec_mapAB} for an illustration. 
We have that $w \left( \usigma' \right) = w \left( \usigma \right) - 1 \leq 2 w \left( \utau \right) - 2$, 
it can be checked that $\usigma' \in \cD \setminus \cD_0^*$, 
and we also have $\psi' \left( v' \right) = u_n$. 
Let $\tilde{\cU}' \left( \utau \right)$ denote the set of all decorated trees $\usigma'$ that can be obtained in this way, and note that the cardinality of $\tilde{\cU}' \left( \utau \right)$ is bounded from above by a constant depending only on $\utau$. 
Since the map $\left( \usigma, \upsi \right) \mapsto \left( \usigma', \upsi', v' \right)$ is one-to-one, we have obtained that
\[
 \left| \tilde{\cE}_{\utau}^{1,A} \left( T_{n+1} \right) \right| \leq \sum_{\usigma' \in \tilde{\cU}' \left( \utau \right)} \sum_{v' \in \sigma'} \sum_{\upsi' : \usigma' \to T_n} \tilde{c} \left( \utau \right) \mathbf{1}_{\left\{ \psi' \left( v' \right) = u_n \right\}}.
\]
Since $u_n$ is uniform, we obtain
\[
 \E \left[ \left| \tilde{\cE}_{\utau}^{1,A} \left( T_{n+1} \right) \right| \right] = \E \left[ \E \left[ \left| \tilde{\cE}_{\utau}^{1,A} \left( T_{n+1} \right) \right| \, \middle| \, \cF_n \right] \right] \leq \sum_{\usigma' \in \tilde{\cU}' \left( \utau \right)} \sum_{v' \in \sigma'} \frac{\tilde{c} \left( \utau \right)}{n} \E \left[ F_{\usigma'} \left( T_n \right) \right] \simless n^{2w\left( \utau \right) - 3},
\]
where in the last inequality we used that for every $\usigma' \in \tilde{\cU}' \left( \utau \right)$ we have  $w \left( \usigma' \right) \leq 2 w \left( \utau \right) - 2$ and $\usigma' \in \cD \setminus \cD_0^*$, and so by Corollary~\ref{cor:first_moment} we have that $\E \left[ F_{\usigma'} \left( T_n \right) \right] \simless n^{2 w\left( \utau \right) - 2}$. 

\medskip

We now turn to estimating $\left| \tilde{\cE}_{\utau}^{1,B} \left( T_{n+1} \right) \right|$. 
Let 
\[
 v^* := \argmin_{v \in \sigma} \dist_{T_{n+1}} \left( \psi \left( v \right), u_{n+1} \right) = \argmin_{v \in \sigma} \dist_{T_n} \left( \psi \left( v \right), u_n \right),
\]
where $\dist_G$ denotes graph distance in a graph $G$. 
Note that the arrow $\ua^* \in \usigma$ is associated with $v^*$ in $\usigma$. 
Define the decorated map $\usigma^*$ from $\usigma$ by removing the arrow $\ua^*$ from $\usigma$. 
We have that $w \left( \usigma^* \right) = w \left( \usigma \right) - 1 \leq 2 w \left( \utau \right) - 2$. 
Furthermore, either $\usigma^* \in \cD \setminus \cD_0^*$ or $v^*$ is the only leaf of $\usigma$ that has label zero. 
Let $\tilde{\cU}^* \left( \utau \right)$ denote the set of all decorated trees $\usigma^*$ that can be obtained in this way, and note that the cardinality of $\tilde{\cU}^* \left( \utau \right)$ is bounded from above by a constant depending only on $\utau$. 
Define also the decorated embedding $\upsi^* : \usigma^* \to T_n$ to be equal to $\upsi$ on $\usigma^*$, i.e., $\upsi^* = \upsi|_{\usigma^*}$. 
Define furthermore $z^*$ to be the neighbor of $\upsi \left( \ua^* \right)$ in $T_{n+1}$; we thus have $z^* = u_n$. 
Due to the ordering of the arrows, the map $\left( \usigma, \upsi \right) \mapsto \left( \usigma^*, \upsi^*, z^* \right)$ is not necessarily one-to-one, 
but any triple $\left( \usigma^*, \upsi^*, z^* \right)$ is associated with at most $w\left( \utau \right)$ pairs $\left( \usigma, \upsi \right)$. Thus, defining $\tilde{c}' \left( \utau \right) := \tilde{c} \left( \utau \right) w \left( \utau \right)$, we have that 
\begin{multline*}
 \left| \tilde{\cE}_{\utau}^{1,B} \left( T_{n+1} \right) \right| \\
\begin{aligned}
 & \leq \sum_{\usigma^* \in \tilde{\cU}^* \left( \utau \right)} \sum_{\upsi^* : \usigma^* \to T_n} \sum_{z^* \in T_n} \tilde{c}' \left( \utau \right) \mathbf{1}_{\left\{ z^* = u_n \right\}} \mathbf{1}_{\left\{ \usigma^* \in \cD \setminus \cD_0^* \right\} \cup \left\{ \usigma^* \in \cD_0^*, \argmin_{v \in \sigma^*} \dist_{T_n} \left( \psi \left( v \right), z^* \right) \in L_0 \left( \usigma^* \right) \right\}} \\
 & \leq \tilde{c}' \left( \utau \right) \sum_{\usigma^* \in \tilde{\cU}^* \left( \utau \right)} \sum_{\upsi^* : \usigma^* \to T_n}   \mathbf{1}_{\left\{ \usigma^* \in \cD \setminus \cD_0^* \right\} \cup \left\{ \usigma^* \in \cD_0^*, \argmin_{v \in \sigma^*} \dist_{T_n} \left( \psi \left( v \right), u_n \right) \in L_0 \left( \usigma^* \right) \right\}}.
\end{aligned}
\end{multline*}
Now if $\usigma^* \in \tilde{\cU}^* \left( \utau \right) \cap \left( \cD \setminus \cD_0^* \right)$, then the sum over embeddings $\upsi^* : \usigma^* \to T_n$ becomes $F_{\usigma^*} \left( T_n \right)$, and by Corollary~\ref{cor:first_moment} we have that $\E \left[ F_{\usigma^*} \left( T_n \right) \right] \simless n^{2 w \left( \utau \right) - 2}$. 
If $\usigma^* \in \tilde{\cU}^* \left( \utau \right) \cap \cD_0^*$, then, as mentioned above, $L_0 \left( \usigma^* \right) = \left\{ v^* \right\}$, and we have
\[
 \P \left( \argmin_{v \in \sigma^*} \dist_{T_n} \left( \psi \left( v \right), u_n \right) = v^* \, \middle| \, \cF_n \right) = \frac{f_{\psi \left( v^* \right)} \left( T_n \right)}{n}.
\]
So by summing over $\upsi^* : \usigma^* \to T_n$, if $\usigma^* \in \tilde{\cU}^* \left( \utau \right) \cap \cD_0^*$, then 
\[
 \E \left[ \sum_{\upsi^* : \usigma^* \to T_n}   \mathbf{1}_{\left\{ \usigma^* \in \cD_0^*, \argmin_{v \in \sigma^*} \dist_{T_n} \left( \psi \left( v \right), u_n \right) \in L_0 \left( \usigma^* \right) \right\}} \, \middle| \, \mathcal{F}_n \right] = \frac{1}{n} F_{\usigma} \left( T_n \right).
\]
Since $\usigma \in \cD \setminus \cD_0^*$ and $w\left( \usigma \right) \leq 2 w \left( \utau \right) - 1$, by Corollary~\ref{cor:first_moment} we have that $\E \left[ F_{\usigma} \left( T_n \right) \right] \simless n^{2 w \left( \utau \right) - 1}$ and thus $\E \left[ n^{-1} F_{\usigma} \left( T_n \right) \right] \simless n^{2 w \left( \utau \right) - 2}$. 
Putting everything together we thus obtain that 
\[
\E \left[ \left| \tilde{\cE}_{\utau}^{1,B} \left( T_{n+1} \right) \right| \right] \simless n^{2 w\left( \utau \right) - 2}. 
\]

\medskip

To estimate $\left| \tilde{\cE}_{\utau}^2 \left( T_{n+1} \right) \right|$ we can do the same thing as in part (a), and we obtain the same bound as for $\left| \tilde{\cE}_{\utau}^1 \left( T_{n+1} \right)\right|$ up to polylogarithmic factors in $n$. 
We omit the details. 
This concludes the proof of part (b).
\end{proof}

\subsection{Constructing the martingales}\label{sec:mg}

We now construct the martingales of Proposition~\ref{prop:mg} with the help of the recurrence relation of Lemma~\ref{lem:recurrence}. 
In order to show that these martingales are bounded in $L^2$, we use the moment estimates of Section~\ref{sec:moment}. 

\medskip

\begin{proof}\textbf{of Proposition~\ref{prop:mg}} 
 Fix a seed tree $S$ with $\left| S\right| = n_0 \geq 2$. For a decorated tree $\utau \in \cD_+$ and $n \geq 2$, define
\[
 \beta_n \left(\utau\right) := \prod_{j=2}^{n-1} \left( 1 + \frac{w\left( \utau \right)}{j} \right)^{-1}, \text{ \ \ when } \left| \utau \right| \geq 2; \text{ \ \ and \ \ } \beta_n \left(\utau\right) := \left( n \times \left[ n \right]_{w \left( \utau \right)} \right)^{-1}, \text{ \ \ when } \left| \utau \right| = 1.
\]
Note that when $\left| \utau \right| \geq 2$, we have $n^{-w\left( \utau \right)} \simless \beta_n \left(\utau\right) \simless n^{-w \left( \utau \right)}$. 

\medskip

We now construct, by induction on the order $\preccurlyeq$ on decorated trees, coefficients 
\[
 \left\{ a_n \left( \utau, \utau' \right) : \utau, \utau' \in \cD_+, \utau' \prec \utau, n \geq n_0 \right\}
\]
 such that
\begin{equation}\label{eq:a_n}
 a_n \left( \utau, \utau' \right) \simless 1, \qquad \qquad \qquad \Delta_n a \left( \utau, \utau' \right) \simless 1 / n,
\end{equation}
and 
\begin{equation}\label{eq:mg_def}
 M_{\utau}^{\left( S \right)} \left( n \right) = \beta_n \left(\utau\right) \left( F_{\utau} \left( \UA \left( n, S \right) \right) - \sum_{\utau' \in \cD_+: \utau' \prec \utau} a_n \left( \utau, \utau' \right) F_{\utau'} \left( \UA \left( n, S \right) \right) \right) 
\end{equation}
is a martingale. Importantly, we shall see that the coefficients $a_n \left( \utau, \utau' \right)$ do not depend on $S$. 
To simplify notation, in the following we omit dependence on $S$ and write $M_{\utau} \left( n \right)$ for $M_{\utau}^{\left( S \right)} \left( n \right)$. Also, as before, we write $T_n$ for $\UA \left( n, S \right)$. 

\medskip

First, when $\left| \utau \right| = 1$, we have $M_{\utau} \left( n \right) = \beta_n \left(\utau\right) F_{\utau} \left( T_n \right) = 1$, which is a martingale. 
Now fix $\utau \in \cD_+$ with $\left| \utau \right| \geq 2$. 
Assume that the coefficients $a_n \left( \usigma, \usigma' \right)$ have been constructed for every $\usigma, \usigma' \in \cD_+$ such that $\usigma' \prec \usigma \prec \utau$ and every $n \geq n_0$, and that they have the desired properties. 
We first claim that there exist constants $\left\{ b_n \left( \usigma, \usigma' \right) : \usigma' \prec \usigma \prec \utau, n \geq n_0 \right\}$ such that $b_n \left( \usigma, \usigma' \right) \simless 1$ and 
\begin{equation}\label{eq:inv}
 F_{\usigma} \left( T_n \right) = \frac{1}{\beta_n \left(\usigma\right)} M_{\usigma} \left( n \right) + \sum_{\usigma' \in \cD_+: \usigma' \prec \usigma} \frac{b_n \left( \usigma, \usigma' \right)}{\beta_n \left( \usigma' \right)} M_{\usigma'} \left( n \right)
\end{equation}
for $n \geq n_0$. To see this, define the matrix $A_n = \left( A_n \left( \usigma, \usigma' \right) \right)_{\usigma, \usigma' \prec \utau}$ by $A_n \left( \usigma, \usigma' \right) = -a_n \left( \usigma, \usigma' \right)$ if $\usigma' \prec \usigma$, $A_n \left( \usigma, \usigma' \right) = 1$ if $\usigma = \usigma'$, and $A_n \left( \usigma, \usigma' \right) = 0$ otherwise. 
Then, using~\eqref{eq:mg_def}, we have for every $n \geq n_0$ the following equality of vectors indexed by $\usigma \in \cD_+$ such that $\usigma \prec \utau$:
\begin{equation}\label{eq:mx_vec}
 \left( \frac{1}{\beta_n \left( \usigma \right)} M_{\sigma} \left( n \right) \right)_{\usigma \prec \utau} = A_n \cdot \left( F_{\usigma} \left( T_n \right) \right)_{\usigma \prec \utau}.
\end{equation}
We can write $\left\{ \usigma \in \cD_+ : \usigma \prec \utau \right\} = \left\{ \usigma_1, \dots, \usigma_K \right\}$ in such a way that $\usigma_i \prec \usigma_j$ implies $i < j$. 
With this convention, $A_n$ is a lower triangular matrix with all diagonal entries equal to $1$ and all entries satisfying $A_n \left( \usigma, \usigma' \right) \simless 1$. 
Therefore $A_n$ is invertible, and its inverse also satisfies these properties. 
That is, if we write $A_n^{-1} = \left( b_n \left( \usigma, \usigma' \right) \right)_{\usigma, \usigma' \prec \utau}$, then $A_n^{-1}$ is a lower triangular matrix that satisfies $b_n \left( \usigma, \usigma \right) = 1$ for all $\usigma \prec \utau$, and $b_n \left( \usigma, \usigma' \right) \simless 1$ for all $\usigma, \usigma' \prec \utau$. 
So~\eqref{eq:inv} follows directly from~\eqref{eq:mx_vec}. 

\medskip

Note that we can write equation~\eqref{eq:rec} of Lemma~\ref{lem:recurrence} more compactly as follows:
\begin{equation}\label{eq:rec2}
 \E \left[ F_{\utau} \left( \UA \left( n + 1, S \right) \right) \, \middle| \, \cF_n \right] = \left( 1 + \frac{w \left( \utau \right)}{n} \right) F_{\utau} \left( \UA \left( n, S \right) \right) + \frac{1}{n} \sum_{\utau' \in \cD: \utau' \prec \utau} c \left( \utau, \utau' \right) F_{\utau'} \left( \UA \left( n, S \right) \right),
\end{equation}
for appropriately defined constants $\left\{ c \left( \utau, \utau' \right) : \utau, \utau' \in \cD, \utau' \prec \utau \right\}$,
and note that since $\utau \in \cD_+$, we have $c \left( \utau, \utau' \right) = 0$ if $\utau' \notin \cD_+$. 
Therefore, using~\eqref{eq:rec2} and~\eqref{eq:inv}, together with the identities $\beta_{n+1} \left( \utau \right) \left( 1 + w \left( \utau \right) / n \right) = \beta_n \left( \utau \right)$ and $\beta_{n+1} \left( \utau \right) n^{-1} = \beta_n \left( \utau \right) \left( n + w \left( \utau \right) \right)^{-1}$, we have for $n \geq n_0$ that 
\begin{multline*}
 \E \left[ \beta_{n+1} \left( \utau \right) F_{\utau} \left( T_{n+1} \right) \, \middle| \, \cF_n \right] = \beta_n \left( \utau \right) F_{\utau} \left( T_n \right) + \frac{\beta_n \left( \utau \right)}{n+w\left( \utau \right)} \sum_{\utau' \in \cD_+: \utau' \prec \utau} c \left( \utau, \utau' \right) F_{\utau'} \left( T_n \right) \\
 = \beta_n \left( \utau \right) F_{\utau} \left( T_n \right) + \frac{1}{n+w\left( \utau \right)} \sum_{\usigma \in \cD_+ : \usigma \prec \utau} \left( c \left( \utau, \usigma \right) + \sum_{\utau' \in \cD_+ : \usigma \prec \utau' \prec \utau} c \left( \utau, \utau' \right) b_n \left( \utau', \usigma \right) \right) \frac{\beta_n \left( \utau \right)}{\beta_n \left( \usigma \right)} M_{\usigma} \left( n \right).
\end{multline*}
For $n \geq n_0$ define
\[
 \overline{a}_n \left( \utau, \usigma \right) = \sum_{j=n_0}^{n-1} \frac{1}{j+ w\left( \utau \right)} \left( c \left( \utau, \usigma \right) + \sum_{\utau' \in \cD_+ : \usigma \prec \utau' \prec \utau} c \left( \utau, \utau' \right) b_j \left( \utau', \usigma \right) \right) \frac{\beta_j \left( \utau \right)}{\beta_j \left( \usigma \right)}.
\]
We thus have
\[
 \E \left[ \beta_{n+1} \left( \utau \right) F_{\utau} \left( T_{n+1} \right) \, \middle| \, \cF_n \right] = \beta_n \left( \utau \right) F_{\utau} \left( T_n \right) + \sum_{\usigma \in \cD_+ : \usigma \prec \utau} \left( \overline{a}_{n+1} \left( \utau, \usigma \right) - \overline{a}_n \left( \utau, \usigma \right) \right) M_{\usigma} \left( n \right).
\]
By our induction hypothesis, $\left\{M_{\usigma} \left( n \right) \right\}_{n \geq n_0}$ is an $\left( \cF_n \right)$-martingale for every $\usigma \prec \utau$, and consequently $\beta_n \left( \utau \right) F_{\utau} \left( T_n \right) - \sum_{\usigma \in \cD_+ : \usigma \prec \utau}  \overline{a}_n \left( \utau, \usigma \right) M_{\usigma} \left( n \right)$ is also an $\left( \cF_n \right)$-martingale. 
By~\eqref{eq:mg_def} we have
\begin{multline*}
 \beta_n \left( \utau \right) F_{\utau} \left( T_n \right) - \sum_{\usigma \in \cD_+ : \usigma \prec \utau}  \overline{a}_n \left( \utau, \usigma \right) M_{\usigma} \left( n \right) \\
\begin{aligned}
 &=  \beta_n \left( \utau \right) F_{\utau} \left( T_n \right) - \sum_{\usigma \in \cD_+ : \usigma \prec \utau}  \overline{a}_n \left( \utau, \usigma \right) \beta_n \left( \usigma \right) \left( F_{\usigma} \left( T_n \right) - \sum_{\utau' \in \cD_+ : \utau' \prec \usigma} a_n \left( \usigma, \utau' \right) F_{\utau'} \left( T_n \right) \right) \\
 &= \beta_n \left( \utau \right) \left( F_{\utau} \left( T_n \right) - \sum_{\usigma \in \cD_+ : \usigma \prec \utau} \left[ \overline{a}_n \left( \utau, \usigma \right) \frac{\beta_n \left( \usigma \right)}{\beta_n \left( \utau \right)}  - \sum_{\utau' \in \cD_+ : \usigma \prec \utau' \prec \utau} \overline{a}_n \left( \utau, \utau' \right) a_n \left( \utau', \usigma \right) \frac{\beta_n \left( \utau' \right)}{\beta_n \left( \utau \right)} \right] F_{\usigma} \left( T_n \right) \right).
\end{aligned}
\end{multline*}
So if we set 
\[
 a_n \left( \utau, \usigma \right) := \overline{a}_n \left( \utau, \usigma \right) \frac{\beta_n \left( \usigma \right)}{\beta_n \left( \utau \right)}  - \sum_{\utau' \in \cD_+ : \usigma \prec \utau' \prec \utau} \overline{a}_n \left( \utau, \utau' \right) a_n \left( \utau', \usigma \right) \frac{\beta_n \left( \utau' \right)}{\beta_n \left( \utau \right)},
\]
then it is clear that $\left\{ M_{\utau} \left( n \right) \right\}_{n \geq n_0}$ defined as in~\eqref{eq:mg_def} is a martingale. 

\medskip

Now let us establish that the coefficients are of the correct order, i.e., let us show~\eqref{eq:a_n}. First note that $\left( n + w \left( \utau \right) \right)^{-1} \simless 1/n$, and that when $\left| \utau \right| \geq 2$, $\beta_n \left( \utau \right) n^{w\left( \utau \right)}$ has a positive and finite limit as $n \to \infty$. Therefore a simple computation shows that for $\usigma, \usigma' \in \cD_+$ with $\left| \usigma \right|, \left| \usigma' \right| \geq 2$, we have
\[
 \frac{\beta_n \left( \usigma \right)}{\beta_n \left( \usigma' \right)} \simless n^{w \left( \usigma' \right) - w \left( \usigma \right)} \qquad \text{ and } \qquad \Delta_n \frac{\beta \left( \usigma \right)}{\beta \left( \usigma' \right)} \simless n^{w \left( \usigma' \right) - w \left( \usigma \right) - 1}.
\]
Furthermore, by the induction hypothesis we have that $b_n \left( \usigma, \usigma' \right) \simless 1$ for every $\usigma, \usigma' \prec \utau$. 
From the definition of $\overline{a}_n \left( \utau, \usigma \right)$ we then immediately get that $\Delta_n \overline{a} \left( \utau, \usigma \right) \simless n^{w \left( \usigma \right) - w \left( \utau \right) - 1}$, and consequently also $\overline{a}_n \left( \utau, \usigma \right) \simless n^{w \left( \usigma \right) - w \left( \utau \right)}$, for every $\usigma \in \cD_+$ such that $\usigma \prec \utau$ and $\left| \usigma \right| \geq 2$. 
So for every $\usigma \in \cD_+$ such that $\usigma \prec \utau$ and $\left| \usigma \right| \geq 2$ we have that 
\begin{equation}\label{eq:coeff_check}
 \overline{a}_n \left( \utau, \usigma \right) \frac{\beta_n \left( \usigma \right)}{\beta_n \left( \utau \right)} \simless 1 \qquad \text{ and } \qquad \Delta_n \left( \overline{a} \left( \utau, \usigma \right) \frac{\beta \left( \usigma \right)}{\beta \left( \utau \right)} \right) \simless \frac{1}{n}.
\end{equation}
One can easily check that~\eqref{eq:coeff_check} holds also when $\left| \usigma \right| = 1$. Now combining all of these estimates with the definition of $a_n \left( \utau, \usigma \right)$, we get that~\eqref{eq:a_n} holds. 
This completes the induction.

\medskip

Finally, what remains to show is that the martingales $M_{\utau}$ are bounded in $L^2$. Since $M_{\utau}$ is a martingale, its increments are orthogonal in $L^2$, and so
\[
 \E \left[ M_{\utau} \left( n \right)^2 \right] = \sum_{j=n_0}^{n-1} \E \left[ \left( M_{\utau} \left( j+1 \right) - M_{\utau} \left( j \right) \right)^2  \right] + \E \left[ M_{\utau} \left( n_0 \right)^2 \right].
\]
Clearly $\E \left[ M_{\utau} \left( n_0 \right)^2 \right] < \infty$ and so it suffices to show that 
\[
 \sum_{n=n_0}^{\infty} \E \left[ \left( M_{\utau} \left( n + 1 \right) - M_{\utau} \left( n \right) \right)^2 \right] < \infty.
\]
Recalling the definition of $M_{\utau}$ from~\eqref{eq:mg_def} we have
\[
 \E \left[ \left( \Delta_n \left( M_{\utau} \right) \right)^2 \right] = \E \left[ \left( \Delta_n \left( \beta_{\cdot} \left( \utau \right) F_{\utau} \left( T_{\cdot} \right) \right) - \sum_{\utau \in \cD_+ : \utau' \prec \utau} \Delta_n \left( \beta_{\cdot} \left( \utau \right) a_{\cdot} \left( \utau, \utau' \right) F_{\utau'} \left( T_{\cdot} \right) \right) \right)^2 \right],
\]
where the dots in the subscripts denote dependence on $n$, on which the difference operator $\Delta_n$ acts. 
By the Cauchy-Schwarz inequality, there exists a positive and finite constant $c$ that depends only on $\utau$ such that for every $n \geq n_0$, the quantity $c \times \E \left[ \left( \Delta_n \left( M_{\utau} \right) \right)^2 \right]$ is bounded from above by
\begin{equation}\label{eq:mg2_ub}
 \E \left[ \left( \Delta_n \left( \beta_{\cdot} \left( \utau \right) F_{\utau} \left( T_{\cdot} \right) \right) \right)^2 \right] + \sum_{\utau \in \cD_+ : \utau' \prec \utau} \E \left[ \left( \Delta_n \left( \beta_{\cdot} \left( \utau \right) a_{\cdot} \left( \utau, \utau' \right) F_{\utau'} \left( T_{\cdot} \right) \right) \right)^2 \right] 
\end{equation}
Since $\Delta_n \left( \beta_{\cdot} \left( \utau \right) F_{\utau} \left( T_{\cdot} \right) \right) = \beta_{n+1} \left( \utau \right) \Delta_n \left( F_{\utau} \left( T_{\cdot} \right) \right) + \left( \Delta_n \left( \beta_{\cdot} \left( \utau \right) \right) \right) F_{\utau} \left( T_n \right)$, we have that
\[
 \E \left[ \left( \Delta_n \left( \beta_{\cdot} \left( \utau \right) F_{\utau} \left( T_{\cdot} \right) \right) \right)^2 \right] \leq 2 \left( \beta_{n+1} \left( \utau \right) \right)^2 \E \left[ \left( \Delta_n \left( F_{\utau} \left( T_{\cdot} \right) \right) \right)^2 \right] + 2 \left( \Delta_n \left( \beta_{\cdot} \left( \utau \right) \right) \right)^2 \E \left[ F_{\utau} \left( T_n \right)^2 \right].
\]
We have seen that $\left( \beta_{n+1} \left( \utau \right) \right)^2 \simless n^{-2w\left( \utau \right)}$ and $\left( \Delta_n \left( \beta_{\cdot} \left( \utau \right) \right) \right)^2 \simless n^{-2w\left( \utau \right) - 2}$, and by Lemma~\ref{lem:second_moment} we have that $\E \left[ F_{\utau} \left( T_n \right)^2 \right] \simless n^{2w\left( \utau \right)}$ and $\E \left[ \left( \Delta_n \left( F_{\utau} \left( T_{\cdot} \right) \right) \right)^2 \right] \simless n^{2 w\left( \utau \right) - 2}$. 
Putting these together we thus have that $\E \left[ \left( \Delta_n \left( \beta_{\cdot} \left( \utau \right) F_{\utau} \left( T_{\cdot} \right) \right) \right)^2 \right] \simless n^{-2}$. 
For the other terms in~\eqref{eq:mg2_ub} we similarly have
\begin{multline*}
 \E \left[ \left( \Delta_n \left( \beta_{\cdot} \left( \utau \right) a_{\cdot} \left( \utau, \utau' \right) F_{\utau'} \left( T_{\cdot} \right) \right) \right)^2 \right] \\
\leq 2 \left( a_{n+1} \left( \utau, \utau' \right) \right)^2 \E \left[ \left( \Delta_n \left( \beta_{\cdot} \left( \utau \right)  F_{\utau'} \left( T_{\cdot} \right) \right) \right)^2 \right] 
+ 2 \left( \Delta_n \left( a_{\cdot} \left( \utau, \utau' \right) \right) \right)^2 \E \left[ \left( \beta_n \left( \utau \right) F_{\utau'} \left( T_n \right) \right)^2 \right].
\end{multline*}
We have seen that $\left( a_{n+1} \left( \utau, \utau' \right) \right)^2 \simless 1$ and $\left( \Delta_n \left( a_{\cdot} \left( \utau, \utau' \right) \right) \right)^2 \simless n^{-2}$. Furthermore, by Lemma~\ref{lem:second_moment} we have that $\E \left[ \left( \beta_n \left( \utau \right) F_{\utau'} \left( T_n \right) \right)^2 \right] \simless n^{2w \left( \utau' \right) - 2 w \left( \utau \right)} \leq 1$, and similarly to the computation above we have that 
$\E \left[ \left( \Delta_n \left( \beta_{\cdot} \left( \utau \right)  F_{\utau'} \left( T_{\cdot} \right) \right) \right)^2 \right] \simless n^{2w \left( \utau' \right) - 2 w \left( \utau \right) - 2} \leq n^{-2}$. 
Putting everything together we get that
\[
 \E \left[ \left( M_{\utau} \left( n + 1 \right) - M_{\utau} \left( n \right) \right)^2 \right] \simless n^{-2},
\]
which is summable, so $M_{\tau}$ is indeed bounded in $L^2$.
\end{proof}

\section{Discussion} \label{sec:discussion} 

We conclude with a comparison of our proof to that of~\cite{curien2014scaling}, and with a list of open problems.

\subsection{Comparison to~\cite{curien2014scaling}}\label{sec:curien}

As discussed in Section~\ref{sec:related}, the key difference in our proof for uniform attachment compared to the proof of~\cite{curien2014scaling} for preferential attachment is the underlying family of statistics. 
For preferential attachment these are based on the degrees of the nodes, whereas for uniform attachment they are based on partition sizes when embedding a given tree, i.e., they are based on global balancedness properties of the tree. 

\medskip

The statistics $F_{\utau} \left( T \right)$ are defined in this specific way in order to make the analysis simpler. 
In particular, it is useful that $F_{\utau} \left( T \right)$ has a combinatorial interpretation as the number of decorated embeddings of $\utau$ in $T$, similarly to the statistics of~\cite{curien2014scaling}. 
However, the notion of a decorated embedding is different in the two settings. 
In~\cite{curien2014scaling}, arrows associated with the decorated tree $\utau$ are mapped by $\uphi$ to corners around the vertices of $\phi \left( \tau \right)$, or in other words, the decorations are local. 
In contrast, in the notion of a decorated embedding as defined in this paper, arrows associated with a decorated tree $\utau$ can be mapped to any vertex in the graph $T$, or in other words, the decorations are nonlocal/global. 

\medskip

While the general structure of our proof is identical to that of~\cite{curien2014scaling}, this local vs.\ global difference in the underlying statistics manifests itself in the details. 
In particular, the main challenge is the second moment estimate provided in Lemma~\ref{lem:second_moment}. 
Here, we associate to each decorated map $\uphi = \uphi_1 \times \uphi_2$ a decorated tree $\usigma$ and a decorated embedding $\upsi$ of it in $\UA \left( n, S \right)$. 
In the case of preferential attachment, the decorated tree $\usigma$ necessarily has all labels be positive, due to the decorations being local. 
However, in the case of uniform attachment, it might happen that a vertex of $\usigma$ has a label being zero, due to the global nature of decorations. 
This is the reason why we need to deal with decorated trees having labels being zero, in contrast with the preferential attachment model, where it suffices to consider decorated trees with positive labels. 
The recurrence relation and the subsequent moment estimates show that there is a subtlety in dealing with decorated trees having labels being zero, as it matters whether the vertices with label zero are leaves or not. 

\medskip

Finally, in our proof of the second moment estimate we also use the fact that the diameter of $\UA \left( n, S \right)$ is on the order of $\log \left( n \right)$ with high probability (see Lemma~\ref{lem:diam}). 
This is again due to the global nature of decorations, and such an estimate is not necessary in the case of preferential attachment.

\subsection{Open problems}\label{sec:open}

\cite{bubeck2014influencePAseed} list several open problems for the preferential attachment model, and these questions can also be asked for the uniform attachment model. We list here a few of them.

\begin{enumerate}
 \item What can be said about general uniform attachment graphs, where multiple edges are added at each step?

 \item Under what conditions on two tree sequences $\left( T_k \right)$, $\left( R_k \right)$ do we have $\lim_{k \to \infty} \delta \left( T_k, R_k \right) = 1$?

 \item Is it possible to give a combinatorial description of the metric $\delta$?

 \item A simple model that interpolates between uniform attachment and (linear) preferential attachment is to consider probabilities of connection proportional to the degree of the vertex raised to some power $\alpha$. 
The results of this paper show that for $\alpha = 0$ different seeds are distinguishable, while for $\alpha = 1$ this is shown by~\cite{bubeck2014influencePAseed} and~\cite{curien2014scaling}. 
What about for $\alpha \in \left( 0, 1 \right)$? Is $\delta_\alpha \left( S, T \right) > 0$ whenever $S$ and $T$ are non-isomorphic and have at least three vertices? What can be said about $\delta_\alpha \left( S, T \right)$ as a function of $\alpha$? Is it monotone in $\alpha$? Is it convex?
\end{enumerate}


\section*{Acknowledgements}

This work was done while R.E.\ and E.M.\ were visiting researchers and M.Z.R. was an intern at the Theory Group of Microsoft Research. They thank the Theory Group for their kind hospitality. S.B.\ thanks Luc Devroye for enlightening discussions on the uniform random recursive tree.
E.M.\ was supported by
NSF grants DMS 1106999 and CCF 1320105, 
by DOD ONR grant N00014-14-1-0823, and 
by grant 328025 from the Simons Foundation.


\bibliographystyle{plainnat}
\bibliography{bib}



\appendix

\section{Facts about the beta-binomial distribution}\label{app:beta}

We prove here Facts~\ref{factbbmoments} and~\ref{factbbsmall} stated in Section~\ref{sec:prelim}. 
Let $M_k$, $k=\alpha + \beta, \alpha + \beta + 1, ...$, be the martingale associated with the standard P\'olya urn process with starting state $(\alpha, \beta)$. 
In other words, the martingale $M_k$ is defined by $M_{\alpha + \beta} = \frac{\alpha}{\alpha + \beta}$ and
$$(k+1) M_{k+1} = \begin{cases}
k M_k + 1 & \mbox{ with probability } M_k \\
k M_k & \mbox{ with probability } 1 - M_k
\end{cases}
$$
independently for different values of $k$. 
Note that for $n \geq \alpha + \beta$,  $n M_n \stackrel{d}{=} B_{\alpha, \beta, n - \alpha - \beta}$, so all results for the martingale $M_n$ transfer to results for $B_{\alpha, \beta, n - \alpha - \beta}$.  
Define $M_\infty = \lim_{k \to \infty} M_k$, and note that this limit exists almost surely by the martingale convergence theorem. 
It is a well-known fact about P\'olya urns that $M_{\infty}$ has a beta distribution with parameters $\alpha$ and $\beta$, i.e., the density of $M_{\infty}$ with respect to the Lebesgue measure is
\[
 h\left( x \right) = \frac{\Gamma(\alpha + \beta)}{\Gamma(\alpha) \Gamma(\beta)} x^{\alpha - 1} (1-x)^{\beta - 1} \mathbf{1}_{\left\{ x \in \left[0,1 \right] \right\}}.
\]
By the formula for the moments of $M_\infty$ (see, e.g., \cite[Chapter 21]{CUD}), we have
$$
\EE[M_\infty^p] = \prod_{j=0}^{p-1} \frac{\alpha + j}{ \alpha + \beta + j} \leq \left (\frac{\alpha + p}{\alpha + \beta} \right )^p \leq (p+1)^p \left (\frac{\alpha}{\alpha + \beta} \right )^p, ~~ \forall p \in \mathbb{N}.
$$
Moreover, since $M_n$ is a martingale, $M_n^p$ is a submartingale for all $p \geq 1$, and thus $\E \left[ M_n^p \right] \leq \E \left[ M_{\infty}^p \right]$. So we have that
$$
\EE[ (n M_n)^p ] \leq n^p (p+1)^p \left (\frac{\alpha}{\alpha + \beta} \right )^p,
$$
which establishes Fact~\ref{factbbmoments} with $C\left( p \right) = p + 1$.

\medskip

Next, in order to prove Fact~\ref{factbbsmall}, we first use the formula for the negative first moment of $M_\infty$ (see, e.g., \cite[Chapter 21]{CUD}): 
for every $\alpha > 1$ we have $\E \left[ M_{\infty}^{-1} \right] = \left( \alpha + \beta - 1 \right) / \left( \alpha - 1 \right)$. 
Thus by Markov's inequality we have that $\P \left( M_\infty < z \right) \leq z \left( \alpha + \beta - 1 \right) / \left( \alpha - 1 \right)$ for every $z > 0$, and thus
\begin{equation}\label{eq:factbbsmall_proof}
 \P \left( M_{\infty} < t \frac{\alpha}{\alpha+\beta} \right) \leq 2t.
\end{equation}
In the case that $\alpha = 1$, we have $h \left( x \right) \leq \beta$ which implies that $\int_0^{\frac{t}{\beta + 1}} h(x) dx \leq t$. 
We conclude that~\eqref{eq:factbbsmall_proof} holds for any $\alpha, \beta \geq 1$. 
Since $M_k$ is a positive martingale, we have 
\[
 \P \left( M_{\infty} \leq 2 z \, \middle| \, M_n \leq z \right) \geq 1/2, ~~ \forall z \in \left( 0, 1 \right).
\]
Combining this inequality with~\eqref{eq:factbbsmall_proof} gives
\[
 \P \left( M_n \leq t \frac{\alpha}{\alpha + \beta} \right) \leq 8t,
\]
and formula~\eqref{eq:fact2} then follows with $C = 8$.

\section{Estimates on sequences}\label{app:est}

\begin{lemma}\label{lem:est}
 Suppose that $\left\{ a_n \right\}_{n \geq 1}$ is a sequence of nonnegative real numbers and that there exists $n_0$ such that $a_{n_0} > 0$. Let $\alpha$ be a positive integer. 
\begin{enumerate}[(a)]
 \item If there exists $N$ such that $a_{n+1} \geq \left( 1 + \alpha / n \right) a_n$ for every $n \geq N$, then $\liminf_{n \to \infty} a_n / n^{\alpha} > 0$. 
 \item If there exist constants $c$, $\gamma$, and $N$ such that for every $n \geq N$,
\[
 a_{n+1} \leq \left( 1 + \alpha / n \right) a_n + c \left( \log \left( n \right) \right)^{\gamma} n^{\alpha - 1},
\]
then $a_n \simless n^\alpha$.
 \item If there exist constants $c$, $\gamma$, and $N$ such that for every $n \geq N$,
\[
 a_{n+1} \leq \left( 1 + \alpha / n \right) a_n + c \left( \log \left( n \right) \right)^{\gamma} n^{\alpha},
\]
then $a_n \simless n^{\alpha+1}$.
\end{enumerate}
\end{lemma}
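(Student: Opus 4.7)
\textbf{Proof proposal for Lemma~\ref{lem:est}.}

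The plan is to treat all three parts via a common device: define the product $P_n := \prod_{k=N}^{n-1}(1+\alpha/k)$ and renormalize $b_n := a_n/P_n$ so that the homogeneous part of the recursion is killed. A standard calculation, using $\log(1+\alpha/k) = \alpha/k + O(1/k^2)$ together with $\sum_{k=N}^{n-1} 1/k = \log n - \log N + O(1)$, shows that $P_n / n^\alpha$ converges to a positive constant as $n\to\infty$; in particular $P_n \asymp n^\alpha$.

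For part (a), first observe that we may assume $a_N > 0$ by replacing $N$ with $\max(N,n_0)$: by the nonnegativity of $a$ and the given one-step inequality, if $a_M = 0$ for some $M \geq N$ then $a_m = 0$ for every $m \geq M$, contradicting the existence of $n_0$ with $a_{n_0} > 0$ unless $n_0 < M$, in which case we simply enlarge $N$ to $n_0$ (the hypothesis of (a) still holds at $n_0$ provided we also enlarge $N$ so the inequality applies). Iterating then gives $a_n \geq a_N \prod_{k=N}^{n-1}(1+\alpha/k) = a_N P_n \gtrsim a_N n^\alpha$, from which $\liminf_{n\to\infty} a_n/n^\alpha > 0$.

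For parts (b) and (c), dividing the recursion by $P_{n+1}$ and using $P_{n+1} = (1+\alpha/n) P_n$ converts the inequality into
\banum
 b_{n+1} \leq b_n + \frac{c (\log n)^\gamma n^{\alpha-\kappa}}{P_{n+1}},
\eanum
where $\kappa = 1$ in case (b) and $\kappa = 0$ in case (c). Since $P_{n+1} \asymp n^\alpha$, telescoping the last display yields
\banum
 b_n \leq b_N + c' \sum_{k=N}^{n-1} (\log k)^\gamma k^{-\kappa}.
\eanum
In case (b) this sum is $\simless (\log n)^{\gamma+1}$, so $a_n = P_n b_n \simless n^\alpha (\log n)^{\gamma+1} \simless n^\alpha$. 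In case (c) the sum is $\simless n (\log n)^\gamma$, so $a_n = P_n b_n \simless n^{\alpha+1} (\log n)^\gamma \simless n^{\alpha+1}$, as desired.

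There is no genuine obstacle here; the only point requiring a bit of care is the precise verification that $P_n / n^\alpha$ converges to a positive constant (so that the division in the renormalization step loses nothing more than a multiplicative constant), and the bookkeeping of initial conditions in part (a). Both are routine.
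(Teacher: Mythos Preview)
Your argument is correct and follows essentially the same renormalization idea as the paper: kill the homogeneous growth and sum the inhomogeneous remainder. The only difference is cosmetic---you divide by the exact product $P_n=\prod_{k=N}^{n-1}(1+\alpha/k)$, which annihilates the $(1+\alpha/n)$ factor cleanly, whereas the paper divides by $n^\alpha$ and then has to verify the elementary estimate $(1+\alpha/n)(n/(n+1))^\alpha \le 1 + c'/n^2$ before iterating; your version is marginally tidier. One small slip in your handling of initial conditions in~(a): from $a_{n+1}\ge(1+\alpha/n)a_n$ the implication runs backwards, i.e.\ $a_M=0$ forces $a_m=0$ for $N\le m\le M$, not for $m\ge M$. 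This does not matter, since one must in any case take $n_0\ge N$ (the paper does so tacitly); without that assumption the statement of~(a) is actually false.
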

\begin{proof}
 (a) By the assumption we have that $a_n \geq a_{n_0} \exp \left( \sum_{j= n_0}^{n-1} \log \left( 1  + \alpha / j \right) \right)$, where $a_{n_0} > 0$. 
For $0 \leq x \leq 1$ we have that $\log \left( 1 + x \right) \geq x - x^2$, and so using the fact that $\sum_{j=1}^\infty 1/j^2 < \infty$, we have that there exists $c > 0$ such that $a_n \geq c \exp \left( \alpha \sum_{j= n_0 \vee \alpha}^{n-1} 1/j \right)$. To conclude, recall that $\sum_{j=1}^{n-1} 1/j > \log \left( n \right)$.

\medskip

(b) Let $b_n := a_n / n^{\alpha}$. We then have that $b_{n+1} \leq \left( 1 + \alpha / n \right) \left( n / \left( n + 1 \right) \right)^{\alpha} b_n + c \left( \log \left( n \right) \right)^{\gamma} / n$. 
There exists a constant $c''= c'' \left( \alpha \right)$ such that $\left( n / \left( n+ 1 \right) \right)^{\alpha} \leq 1 - \alpha / n + c'' / n^2$ for every $n \geq 1$. 
Therefore there exists a constant $c' = c' \left( \alpha \right)$ such that $\left( 1 + \alpha / n \right) \left( n / \left( n + 1 \right) \right)^{\alpha} \leq 1 + c' / n^2$ for every $n \geq 1$. 
Thus we have that $b_{n+1} \leq \left( 1 + c' / n^2 \right) b_n + c \left( \log \left( n \right) \right)^\gamma / n$, and iterating this we get that
\[
 b_n \leq b_1 \prod_{j=1}^{n-1} \left( 1 + c' / j^2 \right) + \sum_{j=1}^{n-1} \left(  \prod_{i=j+1}^{n-1} \left( 1 + c' / i^2 \right) \right) c \left( \log \left( j \right) \right)^\gamma / j.
\]
Since $\prod_{j=1}^{\infty} \left( 1 + c' / j^2 \right) < \infty$, we immediately get that $b_n \simless 1$, and so $a_n \simless n^{\alpha}$. 

\medskip

(c) This is similar to (b) so we do not repeat the argument.
\end{proof}

\section{Tail behavior of the diameter}\label{app:diam}

We reproduce a simple argument of~\cite{devroye2011long} to obtain a tail bound for the diameter $\diam \left( \UA \left(n, S \right) \right)$ of a uniform attachment tree. 
\begin{lemma}\label{lem:diam}
 For every seed tree $S$ there exists a constant $C = C\left( S \right)$ such that for every $K > 20$ we have
\[
\P \left( \diam \left( \UA \left( n, S \right) \right) > K \log \left( n \right) \right) \leq \frac{C\left( S \right)}{n^{K/2}}.
\]
\end{lemma}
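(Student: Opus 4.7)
The plan is to reduce the diameter bound to a height bound from a fixed reference vertex, and then control the height via the moment generating function of the depth of each vertex. First, I would fix any vertex $r \in V\left(S\right)$ and label the vertices of $\UA\left(n, S\right)$ by $\left\{1, \dots, n\right\}$ in the natural way, with $\left\{1, \dots, \left|S\right|\right\}$ being the vertices of the seed and vertex $k > \left|S\right|$ being the one added at time $k$. Setting $D_k := \dist_{\UA\left(n, S\right)}\left(r, k\right)$ and $H_n := \max_{1 \leq k \leq n} D_k$, the triangle inequality yields $\diam\left(\UA\left(n, S\right)\right) \leq 2 H_n$, so it suffices to show $\P\left(H_n > \left(K/2\right) \log\left(n\right)\right) \leq C\left(S\right) n^{-K/2}$ for $K > 20$.

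Next, I would set up a recurrence for $\phi_k\left(\lambda\right) := \E\left[e^{\lambda D_k}\right]$. Since each new vertex attaches to a uniformly chosen existing vertex, for $k > \left|S\right|$ we have $D_k \stackrel{d}{=} D_{U_k} + 1$ where $U_k$ is uniform on $\left\{1, \dots, k-1\right\}$, which yields
\[
\phi_k\left(\lambda\right) = \frac{e^\lambda}{k-1} \sum_{j=1}^{k-1} \phi_j\left(\lambda\right), \qquad k > \left|S\right|.
\]
For $k \leq \left|S\right|$, $D_k$ is deterministic and bounded by $\diam\left(S\right)$, so $\phi_k\left(\lambda\right) \leq e^{\lambda \diam\left(S\right)}$. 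Taking the difference of the recurrence at indices $k$ and $k-1$ gives $\phi_k\left(\lambda\right) = \frac{k - 2 + e^\lambda}{k - 1} \phi_{k-1}\left(\lambda\right)$, and iterating this together with the initial bound would yield $\phi_k\left(\lambda\right) \leq C\left(S, \lambda\right) k^{e^\lambda - 1}$ for a constant $C\left(S, \lambda\right)$ depending only on the seed and $\lambda$.

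Finally, I would apply Markov's inequality with $t = \left(K/2\right) \log\left(n\right)$ and union bound over $k \in \left\{1, \dots, n \right\}$:
\[
\P\left(H_n > \left(K/2\right) \log\left(n\right)\right) \leq \sum_{k=1}^n e^{-\lambda t} \phi_k\left(\lambda\right) \leq C\left(S, \lambda\right) n^{e^\lambda - \lambda K/2}.
\]
Choosing $\lambda = 2$, the exponent becomes $e^2 - K$; since $K > 20 > 2 e^2$, we have $e^2 - K < -K/2$, which gives the desired bound.

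The argument is essentially a standard Chernoff computation for the height of a uniform random recursive tree, so no substantial obstacle arises. The only point requiring care is tracking the seed-dependence of the constant $C\left(S\right)$, which enters through the initial values $\phi_j\left(\lambda\right) \leq e^{\lambda \diam\left(S\right)}$ for $j \leq \left|S\right|$ and gets absorbed into $C\left(S\right)$ in the final bound.
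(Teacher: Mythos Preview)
Your proof is correct. Both your argument and the paper's reduce the diameter to twice the height from a fixed root and then apply a Chernoff-type bound together with a union bound over vertices, but the execution differs in two respects. First, the paper handles the seed by conditioning: it writes $\P\left(\diam\left(\UA\left(n,S\right)\right)>K\log n\right)\leq \left(\P\left(\UA\left(\left|S\right|,S_2\right)=S\right)\right)^{-1}\,\P\left(\diam\left(\UA\left(n,S_2\right)\right)>K\log n\right)$ and then works only with the standard single-edge seed; you instead absorb the seed directly into the initial conditions $\phi_j\left(\lambda\right)\leq e^{\lambda\,\diam\left(S\right)}$ for $j\leq\left|S\right|$, which is arguably cleaner. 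Second, for the height bound the paper uses Devroye's multiplicative representation of ancestry (the parent of vertex $j$ has label $\lfloor jU\rfloor$, so $R_j\leq\min\{t:jU_1\cdots U_t<1\}$) and optimizes $\lambda$ in the Chernoff bound, whereas you set up and solve the MGF recurrence $\phi_k\left(\lambda\right)=\frac{k-2+e^\lambda}{k-1}\,\phi_{k-1}\left(\lambda\right)$ to get $\phi_k\left(\lambda\right)\leq C\left(S,\lambda\right)k^{e^\lambda-1}$ and then plug in a fixed $\lambda=2$. Your telescoping identity is only valid for $k\geq\left|S\right|+2$, but this is harmless since the finitely many initial terms are absorbed into $C\left(S,\lambda\right)$; likewise the choice $\lambda=2$ gives exponent $e^2-K<-K/2$ exactly when $K>2e^2\approx 14.78$, so the hypothesis $K>20$ is enough.
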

\begin{proof}
 First, if we set $C \left( S \right) := \left( \P \left( \UA \left( \left| S \right|, S_2 \right) = S \right) \right)^{-1}$, then we have
\begin{align*}
 \P \left( \diam \left( \UA \left( n, S \right) \right) > K \log \left( n \right) \right) &= \P \left( \diam \left( \UA \left( n, S_2 \right) \right) > K \log \left( n \right) \, \middle| \, \UA \left( \left| S \right|, S_2  \right) = S \right) \\
&\leq C\left( S \right) \P \left( \diam \left( \UA \left( n, S_2 \right) \right) > K \log \left( n \right) \right),
\end{align*}
so it remains to bound the tail of $\diam \left( \UA \left( n, S_2 \right) \right)$. 

\medskip

For notational convenience, shift the names of the vertices so that they consist of the set $\left\{ 0, 1, \dots, n-1 \right\}$ (instead of $\left\{1, 2, \dots, n \right\}$), and call vertex $0$ the root. 
With this convention, the label of the parent of vertex $j$ is distributed as $\left\lfloor j U \right\rfloor$ where $U$ is uniform on $\left[ 0, 1 \right]$. 
Similarly, an ancestor $\ell$ generations back has a label distributed like $\left\lfloor \dots \left\lfloor \left\lfloor j U_1 \right\rfloor U_2 \right\rfloor \dots U_{\ell} \right\rfloor$, where the $U_i$'s are i.i.d.\ uniform on $\left[0,1 \right]$. 

\medskip

Define $R_j$ to be the distance from vertex $j$ to the root. By the triangle inequality we have that $\diam \left( \UA \left( n, S \right) \right) \leq 2 \max_{1 \leq j \leq n-1} R_j$, so it suffices to bound the tail of this latter quantity. Using a union bound, it then suffices to bound the tail of $R_j$ for each $j$. 
Now notice that $R_j \leq \min \left\{ t : j U_1 \dots U_t < 1 \right\}$. 
Consequently, for any $\lambda > 0$ we have
\[
 \P\left( R_j > t \right) \leq \P \left( j U_1 \dots U_t \geq 1 \right) \leq \E \left[ \left( j U_1 \dots U_t \right)^{\lambda} \right] = j^{\lambda} \left( \lambda + 1 \right)^{-t}.
\]
This is optimized by choosing $\lambda = t/\log \left( j \right) - 1$ (provided $t > \log \left( j \right)$) to obtain
\[
 \P\left( R_j > t \right) \leq \exp \left( t - \log \left( j \right) - t \log \left( \frac{t}{\log \left( j \right)} \right) \right) \leq \exp \left( t - t \log \left( \frac{t}{\log \left( n \right)} \right) \right),
\]
when $j\leq n$. Putting everything together we get that
\begin{align*}
 \P \left( \diam \left( \UA \left( n, S_2 \right) \right) > K \log \left( n \right) \right) &\leq \P \left( \max_{1 \leq j \leq n-1} R_j > \frac{K}{2} \log \left( n \right) \right) \leq \sum_{j=1}^{n-1} \P \left( R_j > \frac{K}{2} \log \left( n \right) \right) \\
&\leq n^{-\frac{K}{2} \log \left( \frac{K}{2} \right) + \frac{K}{2} + 1} \leq n^{-K/2},
\end{align*}
where the last inequality holds when $K > 20$.
\end{proof}

\end{document}